\tikzset{tab/.style={matrix of math nodes,column sep=-.35, row sep=-.35,text height=7pt,text width=7pt,align=center,inner sep=2,font=\footnotesize}}
\newcommand{\bon}{\overline{1}}
\newcommand{\btw}{\overline{2}}
\newcommand{\bth}{\overline{3}}
\newcommand{\bi}{\overline\imath}
\newcommand{\brn}{\overline{n}}
\newcommand{\g}{\mathfrak{g}}
\newcommand{\HH}{\mathcal{H}}
\newcommand{\id}{\operatorname{id}}
\newcommand{\inner}[2]{\left\langle #1, #2 \right\rangle}
\newcommand{\iso}{\cong}
\newcommand{\longdoublearrow}{\relbar\joinrel\twoheadrightarrow}
\newcommand{\longhookarrow}{\lhook\joinrel\longrightarrow}
\renewcommand{\mid}{:}
\newcommand{\RC}{\operatorname{RC}} % rigged configurations
\newcommand{\TT}{\mathcal{T}} % B(\infty) of marginally large tableaux
\newcommand{\seg}{\operatorname{seg}} % seg statistic
\newcommand{\rem}{\operatorname{rem}} % rem statistic
\newcommand{\rpt}{\operatorname{rpt}} % rpt statistic
\newcommand{\diff}{\operatorname{diff}} % diff statistic
\newcommand{\wt}{\mathrm{wt}}
\newcommand{\ZZ}{\mathbf{Z}}
\newcommand{\absval}[1]{\left\lvert #1 \right\rvert}
\lstdefinelanguage{Sage}[]{Python}
{morekeywords={False,sage,True},sensitive=true}
\definecolor{dblackcolor}{rgb}{0.0,0.0,0.0}
\definecolor{dbluecolor}{rgb}{0.01,0.02,0.7}
\definecolor{dgreencolor}{rgb}{0.2,0.4,0.0}
\definecolor{dgraycolor}{rgb}{0.30,0.3,0.30}
\protected\def\specialmergetwolists{%
  \begingroup
  \@ifstar{\def\cnta{1}\@specialmergetwolists}
    {\def\cnta{0}\@specialmergetwolists}%
}
\def\@specialmergetwolists#1#2#3#4{%
  \def\tempa##1##2{%
    \edef##2{%
      \ifnum\cnta=\@ne\else\expandafter\@firstoftwo\fi
      \unexpanded\expandafter{##1}%
    }%
  }%
  \tempa{#2}\tempb\tempa{#3}\tempa
  \def\cnta{0}\def#4{}%
  \foreach \x in \tempb{%
    \xdef\cnta{\the\numexpr\cnta+1}%
    \gdef\cntb{0}%
    \foreach \y in \tempa{%
      \xdef\cntb{\the\numexpr\cntb+1}%
      \ifnum\cntb=\cnta\relax
        \xdef#4{#4\ifx#4\empty\else,\fi\x#1\y}%
        \breakforeach
      \fi
    }%
  }%
  \endgroup
}
\DeclareDocumentCommand\rpp{ m m g }{
	\foreach \x [count=\s from 1] in {#1}{
	        {\ifnum\s=1
	                \draw (0,-\s)--(\x,-\s);
	                \fi}
	   \draw (0,-\s-1) to (\x,-\s-1);
	   \foreach \y in {0, ..., \x} {\draw (\y,-\s)--(\y,-\s-1);}
	}
	\specialmergetwolists{/}{#1}{#2}\ziplist
	\foreach \x/\y [count=\yi from 1] in \ziplist{
	    \node[anchor=west,font=\scriptsize] at (\x,-\yi - .5) {$\y$};
	}
	\IfValueT {#3}
	{\foreach \z [count=\zi from 1] in {#3} {\node[anchor=east,font=\scriptsize] at (0,-\zi - .5) {$\z$};}}
	{}
}
\theoremstyle{plain}
\newtheorem{thm}{Theorem}[section]
\newtheorem{lemma}[thm]{Lemma}
\newtheorem{prop}[thm]{Proposition}
\newtheorem{cor}[thm]{Corollary}
\theoremstyle{definition}
\newtheorem{dfn}[thm]{Definition}
\newtheorem{ex}[thm]{Example}
\newtheorem{remark}[thm]{Remark}
\numberwithin{equation}{section}
\numberwithin{figure}{section}
\numberwithin{table}{section}
\begin{document}
\title[Marginally large tableaux and rigged configurations]{Connecting marginally large tableaux and rigged configurations via crystals}
\author{Ben Salisbury}
\address{Department of Mathematics, Central Michigan University, Mt. Pleasant, MI 48859}
\email{ben.salisbury@cmich.edu}
\urladdr{http://people.cst.cmich.edu/salis1bt/}
\author{Travis Scrimshaw}
\address{Department of Mathematics, University of California, Davis, CA 95616}
\email{tscrim@ucdavis.edu}
\urladdr{http://www.math.ucdavis.edu/~scrimsha/}
\thanks{T.S.\ was partially supported by NSF grant OCI-1147247.}
\keywords{crystal, rigged configuration, marginally large tableaux, segments}
\subjclass[2010]{05E10, 17B37}

\begin{abstract}
We show that the bijection from rigged configurations to tensor products of Kirillov-Reshetikhin crystals extends to a crystal isomorphism between the $B(\infty)$ models given by rigged configurations and marginally large tableaux.
\end{abstract}

\maketitle

\section{Introduction}

In~\cite{KKR86,KR86}, Kerov, Kirillov, and Reshetikhin described a recursive bijection between classically highest-weight rigged configurations in type $A_n^{(1)}$ and standard Young tableaux, showing the Kostka polynomial can be expressed as a fermionic formula. This was then extended to Littlewood-Richardson tableaux and classically highest weight elements in a tensor product of Kirillov-Reshetikhin (KR) crystals in~\cite{KSS1999} for, again, type $A_n^{(1)}$. A similar bijection $\Phi$ between rigged configurations and tensor products of the KR crystal $B^{1,1}$ corresponding to the vector representation was extended to all non-exceptional affine types in~\cite{OSS03}, type $E_6^{(1)}$ in~\cite{OS12}, and $D_4^{(3)}$ in~\cite{Scrimshaw15}.

Following~\cite{KSS1999}, it was conjectured that the bijection $\Phi$ can be further extended to a tensor product of general KR crystals with the major step being the algorithm for $B^{1,1}$. This has been proven in a variety of cases~\cite{OSS03III,OSS13,S05,SS2006,SS15,Scrimshaw15}. Despite this bijection's recursive definition, it is conjectured (see for instance~\cite{SS15}) that $\Phi$ sends a combinatorial statistic called cocharge~\cite{OSS03} to the algebraic statistic called energy~\cite{HKOTY99}, proving the so-called $X = M$ conjecture of~\cite{HKOTT02,HKOTY99}. Additionally, the bijection $\Phi$ is conjectured to translate the combinatorial $R$-matrix~\cite{KKMMNN91} into the identity on rigged configurations.

The description of $\Phi$ on classically highest-weight elements led to a description of classical crystal operators in simply-laced types in~\cite{S06} and non-simply-laced finite types in~\cite{SS15}. It was shown for type $A_n^{(1)}$ in~\cite{DS06} and $D_n^{(1)}$ in~\cite{Sakamoto13} that $\Phi$ is a classical crystal isomorphism. Using virtual crystals~\cite{OSS03II}, it can be shown $\Phi$ is a classical crystal isomorphism in non-exceptional affine types~\cite{SS15}.

Rigged configurations were also extended beyond the context of highest weight classical crystals to $B(\infty)$ in~\cite{SS2015}. There is also a similar extension of the Kashiwara-Nakashima tableaux~\cite{KN94} and Kang-Misra tableaux~\cite{KM94} (for type $G_2$), which are used to describe KR crystals~\cite{FOS09,KMOY07,Yamane98}, to the marginally large tableaux model of $B(\infty)$~\cite{HL08,HL12}. The goal of this paper is to connect with a crystal isomorphism these two models by using the bijection $\Phi$. In particular, the crystal isomorphism we obtain from Corollary~\ref{cor:bijection} is given combinatorially, in the sense that the description does not use the Kashiwara operators.

This paper is organized as follows. In Section~\ref{sec:crystals}, we give a background on crystals. In Section~\ref{sec:tableaux}, we describe the tableaux model for highest weight crystals and marginally large tableaux. In Section~\ref{sec:rigcon}, we give background on rigged configurations and describe the bijection $\Phi$. In Section~\ref{sec:isomorphism}, we construct our isomorphism between the rigged configuration model and marginally large tableaux model for $B(\infty)$. In Section~\ref{sec:statistics}, we describe certain statistics on highest weight crystals and $B(\infty)$.

\section{Crystals}
\label{sec:crystals}

Let $\g$ be a finite-dimensional simple Lie algebra with index set $I$, Cartan matrix $A = (A_{ab})_{a,b\in I}$, weight lattice $P$, root lattice $Q$, fundamental weights $\{\Lambda_a \mid a \in I\}$, simple roots $\{\alpha_a \mid a\in I\}$, and simple coroots $\{h_a \mid a\in I\}$.  There is a canonical pairing $\langle\ ,\ \rangle\colon P^\vee \times P \longrightarrow \ZZ$ defined by $\langle h_a, \alpha_b \rangle = A_{ab}$, where $P^{\vee} = \bigoplus_{a\in I} \ZZ h_a$ is the dual weight lattice.  The quantum group associated to $\g$ is denoted $U_q(\g)$, though we will not need the details concerning $U_q(\g)$.  The interested reader is encouraged to see \cite{HK02,Lusztig93} for more details.

An \emph{abstract $U_q(\g)$-crystal} is a nonempty set $B$ together with maps
\[
e_a, f_a \colon B \longrightarrow B\sqcup\{0\}, \ \ \
\varepsilon_a, \varphi_a\colon B \longrightarrow \ZZ\sqcup\{-\infty\}, \ \ \
\wt \colon B \longrightarrow P,
\]
subject to the conditions
\begin{enumerate}
\item $f_a b = b'$ if and only if $b = e_a b'$ for $b,b'\in B$ and $a \in I$;
\item if $f_a b \neq 0$, then $\wt(f_a b) = \wt(b) - \alpha_a$ for all $a\in I$; and
\item $\varphi_a(b) - \varepsilon_a(b) = \langle h_a, \wt(b) \rangle$ for all $b\in B$ and $a \in I$.
\end{enumerate}
The maps $\{e_a \mid a \in I\}$ are called the \emph{Kashiwara raising operators} and the maps $\{f_a \mid a\in I\}$ are called the \emph{Kashiwara lowering operators}.

\begin{ex}
For a dominant integral weight $\lambda$, the crystal basis 
\[
B(\lambda) = \{ f_{a_k}\cdots f_{a_1}u_{\lambda} \mid a_1,\ldots,a_k \in I,\ k\in\ZZ_{\ge0} \} \setminus \{0\}
\] 
of an irreducible, highest weight $U_q(\g)$-module $V(\lambda)$ is an abstract $U_q(\g)$-crystal. (See~\cite{HK02,K91} for details.)  The crystal $B(\lambda)$ is characterized by the following properties.
\begin{enumerate}
\item The element $u_\lambda \in B(\lambda)$ is the unique element such that $\wt(u_\lambda) = \lambda$.
\item For all $a \in I$, $e_a u_{\lambda} = 0$.
\item For all $a \in I$, $f_a^{\inner{h_a}{\lambda} + 1} u_{\lambda} = 0$. 
\end{enumerate}
\end{ex}

\begin{ex}
The crystal basis
\[
B(\infty) = \{f_{a_k}\cdots f_{a_1} u_{\infty} \mid a_1,\dots,a_k\in I,\ k\in\ZZ_{\ge0}\}
\] 
of the negative half $U_q^-(\g)$ of the quantum group (equivalently a Verma module of highest weight $0$) is a $U_q(\g)$-crystal.  (See~\cite{HK02,K91} for details.)  Some important properties of $B(\infty)$ are the following.
\begin{enumerate}
\item The element $u_\infty \in B(\infty)$ is the unique element such that $\wt(u_\infty) = 0$.
\item For all $a\in I$, $e_au_\infty =0$.
\item For any sequence $(a_1,\dots,a_k)$ from $I$, $f_{a_k}\cdots f_{a_1}u_\infty \neq 0$.
\end{enumerate}
\end{ex}

%An abstract $U_q(\g)$-crystal is said to be \emph{upper regular} if, for all $b\in B$,
%\[
%\varepsilon_i(b) = \max\{ k \in\ZZ_{\ge0} \mid e_i^k b \neq 0 \}.
%\]
%Similarly, an abstract $U_q(\g)$-crystal is said to be \emph{lower regular} if, for all $b\in B$,
%\[
%\varphi_i(b) = \max\{ k \in \ZZ_{\ge0} \mid f_i^kb \neq 0\}.
%\]
%If $B$ is both upper regular and lower regular, then we say $B$ is \emph{regular}.  In this latter case, we may depict the entire $i$-string through $b\in B$ diagrammatically as
%\[
%e_i^{\varepsilon_i(b)}b \overset{i}{\longrightarrow}
%e_i^{\varepsilon_i(b)-1}b \overset{i}{\longrightarrow}
%\cdots \overset{i}{\longrightarrow}
%e_ib \overset{i}{\longrightarrow}
%b \overset{i}{\longrightarrow}
%f_ib \overset{i}{\longrightarrow}
%\cdots \overset{i}{\longrightarrow}
%f_i^{\varphi_i(b)-1}b \overset{i}{\longrightarrow}
%f_i^{\varphi_i(b)}b.
%\]
%Note that $B(\lambda)$ is a regular $U_q(\g)$-crystal, but $B(\infty)$ is only upper regular.

Let $B_1$ and $B_2$ be two abstract $U_q(\g)$-crystals.  A \emph{crystal morphism} $\psi\colon B_1 \longrightarrow B_2$ is a map $B_1\sqcup\{0\} \longrightarrow B_2 \sqcup \{0\}$ such that
\begin{enumerate}
\item $\psi(0) = 0$;
\item if $b \in B_1$ and $\psi(b) \in B_2$, then $\wt(\psi(b)) = \wt(b)$, $\varepsilon_a(\psi(b)) = \varepsilon_a(b)$, and $\varphi_a\psi(b)) = \varphi_a(b)$;
\item for $b \in B_1$, we have $\psi(e_a b) = e_a \psi(b)$ provided $\psi(e_ab) \neq 0$ and $e_a\psi(b) \neq 0$;
\item for $b\in B_1$, we have $\psi(f_a b) = f_a \psi(b)$ provided $\psi(f_ab) \neq 0$ and $f_i\psi(b) \neq 0$.
\end{enumerate}
A morphism $\psi$ is called \emph{strict} if $\psi$ commutes with $e_a$ and $f_a$ for all $a \in I$.  Moreover, a morphism $\psi\colon B_1 \longrightarrow B_2$ is called an \emph{embedding} if the induced map $B_1 \sqcup\{0\} \longrightarrow B_2 \sqcup \{0\}$ is injective.

Again let $B_1$ and $B_2$ be abstract $U_q(\g)$-crystals.  The tensor product $B_2 \otimes B_1$ is defined to be the Cartesian product $B_2\times B_1$ equipped with crystal operations defined by
\begin{align*}
e_a(b_2 \otimes b_1) &= \begin{cases}
e_a b_2 \otimes b_1 & \text{if } \varepsilon_a(b_2) > \varphi_a(b_1) \\
b_2 \otimes e_a b_1 & \text{if } \varepsilon_a(b_2) \le \varphi_a(b_1),
\end{cases} \\
f_a(b_2 \otimes b_1) &= \begin{cases}
f_a b_2 \otimes b_1 & \text{if } \varepsilon_a(b_2) \ge \varphi_a(b_1) \\
b_2 \otimes f_a b_1 & \text{if } \varepsilon_a(b_2) < \varphi_a(b_1),
\end{cases} \\ 
\varepsilon_a(b_2 \otimes b_1) &= \max\big( \varepsilon_a(b_2), \varepsilon_a(b_1) - \inner{h_a}{\wt(b_2)} \bigr) \\
\varphi_a(b_2 \otimes b_1) &= \max\big( \varphi_a(b_1), \varphi_a(b_2) + \inner{h_a}{\wt(b_1)} \bigr) \\
\wt(b_2 \otimes b_1) &= \wt(b_2) + \wt(b_1).
\end{align*}

\begin{remark}
Our convention for tensor products is opposite the convention given by Kashiwara in~\cite{K91}.
\end{remark}

%More generally if $B_1, \dotsc, B_t$ are crystals, to compute the action of the Kashiwara operators on the tensor product $B = B_t \otimes \cdots \otimes B_2 \otimes B_1$, we use the \emph{signature rule}.  Indeed, for $a \in I$ and $b = b_t \otimes \cdots \otimes b_2 \otimes b_1$ in $B$, write
%\[
%\underbrace{-\cdots-}_{\varphi_a(b_t)}\
%\underbrace{+\cdots+}_{\varepsilon_a(b_t)}\
%\cdots\
%\underbrace{-\cdots-}_{\varphi_a(b_1)}\
%\underbrace{+\cdots+}_{\varepsilon_a(b_1)}\ .
%\]
%From the above sequence, successively delete any $(+,-)$-pair to obtain a sequence
%\[
%a\sgn(b) :=
%\underbrace{-\cdots-}_{\varphi_a(b)}\
%\underbrace{+\cdots+}_{\varepsilon_a(b)}
%\ .
%\]
%Suppose $1 \leq j_-, j_+ \leq t$ are such that $b_{j_-}$ contributes the leftmost $-$ in $a\sgn(b)$ and $b_{j_+}$ contributes the rightmost $+$ in $a\sgn(b)$.  Then 
%\begin{align*}
%e_a b &= b_t \otimes \cdots \otimes b_{j_-+1} \otimes e_ab_{j_-} \otimes b_{j_--1} \otimes \cdots \otimes b_1, \\
%f_a b &= b_t \otimes \cdots \otimes b_{j_++1} \otimes f_ab_{j_+} \otimes b_{j_+-1} \otimes \cdots \otimes b_1.
%\end{align*}

We say an abstract $U_q(\g)$-crystal is simply a \emph{$U_q(\g)$-crystal} if it is crystal isomorphic to the crystal basis of an integrable $U_q(\g)$-module.

\section{Tableaux model}
\label{sec:tableaux}

Let $\g$ be of finite classical type. We review the Kashiwara-Nakashima tableaux and Kang-Misra tableaux, which we call \emph{classical tableaux}, model for highest weight crystals $B(\lambda)$ and the marginally large tableaux model for $B(\infty)$.

\subsection{Fundamental crystals and classical tableaux}

Recall that a tableau is called semistandard over an alphabet $J = \{ j_1 \prec j_2 \prec \cdots \prec j_p \}$ if entries are weakly increasing in rows and strictly increasing in columns, with respect to $\prec$.  Let $J(X_n)$ be the alphabet for the semistandard tableaux of type $X_n$.  Then
\begin{equation}\label{eq:alphabets}
\begin{aligned}
J(A_n) &= \{ 1 \prec 2 \prec \cdots \prec n+1 \} ,\\
J(B_n) &= \{ 1 \prec \cdots \prec n \prec 0 \prec \overline n \prec \cdots \prec \overline1 \},\\
J(C_n) &= \{ 1 \prec \cdots \prec n \prec \overline n \prec \cdots \prec \overline1 \},\\
J(D_{n+1}) &= \{ 1 \prec \cdots \prec n \prec n+1,\overline{n+1}, \prec \overline{n} \prec \cdots \prec \overline1 \}, \\
J(G_2) &= \{ 1 \prec 2 \prec 3 \prec 0 \prec \overline{3} \prec \overline{2} \prec \overline{1} \}.
\end{aligned}
\end{equation}

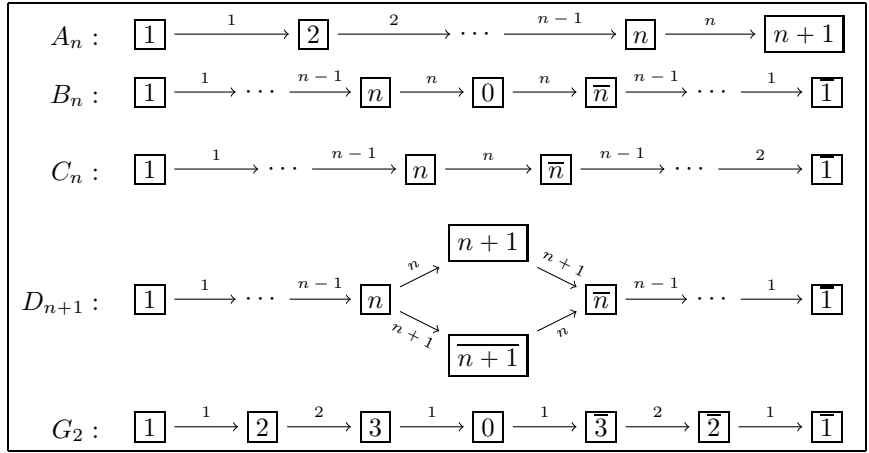
\begin{figure}[t]
\[
\begin{array}{|rl|}\hline
A_n: & 
\begin{tikzpicture}[scale=1.45,baseline=-4]
\node (1) at (0,0) {$\young(1)$};
\node (2) at (1.5,0) {$\young(2)$};
\node (d) at (3.0,0) {$\cdots$};
\node (n-1) at (4.5,0) {$\young(n)$};
\node (n) at (6,0) {$\boxed{n+1}$};
\draw[->] (1) to node[above]{\tiny$1$} (2);
\draw[->] (2) to node[above]{\tiny$2$} (d);
\draw[->] (d) to node[above]{\tiny$n-1$} (n-1);
\draw[->] (n-1) to node[above]{\tiny$n$} (n);
\end{tikzpicture}\\
B_n: &
\begin{tikzpicture}[baseline=-4]
\node (1) at (0,0) {$\young(1)$};
\node (d1) at (1.5,0) {$\cdots$};
\node (n) at (3,0) {$\young(n)$};
\node (0) at (4.5,0) {$\young(0)$};
\node (bn) at (6,0) {$\young(\brn)$};
\node (d2) at (7.5,0) {$\cdots$};
\node (b1) at (9,0) {$\young(\bon)$};
\draw[->] (1) to node[above]{\tiny$1$} (d1);
\draw[->] (d1) to node[above]{\tiny$n-1$} (n);
\draw[->] (n) to node[above]{\tiny$n$} (0);
\draw[->] (0) to node[above]{\tiny$n$} (bn);
\draw[->] (bn) to node[above]{\tiny$n-1$} (d2);
\draw[->] (d2) to node[above]{\tiny$1$} (b1);
\end{tikzpicture} 
\\[10pt]
C_n: &
\begin{tikzpicture}[baseline=-4]
\node (1) at (0,0) {$\young(1)$};
\node (d1) at (1.8,0) {$\cdots$};
\node (n) at (3.6,0) {$\young(n)$};
\node (bn) at (5.4,0) {$\young(\brn)$};
\node (d2) at (7.2,0) {$\cdots$};
\node (b1) at (9,0) {$\young(\bon)$};
\draw[->] (1) to node[above]{\tiny$1$} (d1);
\draw[->] (d1) to node[above]{\tiny$n-1$} (n);
\draw[->] (n) to node[above]{\tiny$n$} (bn);
\draw[->] (bn) to node[above]{\tiny$n-1$} (d2);
\draw[->] (d2) to node[above]{\tiny$2$} (b1);
\end{tikzpicture}
\\[10pt]
D_{n+1}: & 
\begin{tikzpicture}[baseline=-4]
\node (1) at (0,0) {$\young(1)$};
\node (d1) at (1.5,0) {$\cdots$};
\node (n-1) at (3,0) {$\young(n)$};
\node (n) at (4.5,.75) {$\boxed{n+1}$};
\node (bn) at (4.5,-.75) {$\boxed{\overline{n+1}}$};
\node (bn-1) at (6,0) {$\young(\brn)$};
\node (d2) at (7.5,0) {$\cdots$};
\node (b1) at (9,0) {$\young(\bon)$};
\draw[->] (1) to node[above]{\tiny$1$} (d1);
\draw[->] (d1) to node[above]{\tiny$n-1$} (n-1);
\draw[->] (n-1) to node[above,sloped]{\tiny$n$} (n);
\draw[->] (n-1) to node[below,sloped]{\tiny$n+1$} (bn);
\draw[->] (n) to node[above,sloped]{\tiny$n+1$} (bn-1);
\draw[->] (bn) to node[below,sloped]{\tiny$n$} (bn-1);
\draw[->] (bn-1) to node[above]{\tiny$n-1$} (d2);
\draw[->] (d2) to node[above]{\tiny$1$} (b1);
\end{tikzpicture}
\\[30pt]
G_2: & 
\begin{tikzpicture}[baseline=-4]
\node (1) at (0,0) {$\young(1)$};
\node (2) at (1.5,0) {$\young(2)$};
\node (3) at (3,0) {$\young(3)$};
\node (0) at (4.5,0) {$\young(0)$};
\node (b3) at (6,0) {$\young(\bth)$};
\node (b2) at (7.5,0) {$\young(\btw)$};
\node (b1) at (9,0) {$\young(\bon)$};
\path[->,font=\tiny]
 (1) edge node[above]{$1$} (2)
 (2) edge node[above]{$2$} (3)
 (3) edge node[above]{$1$} (0)
 (0) edge node[above]{$1$} (b3)
 (b3) edge node[above]{$2$} (b2)
 (b2) edge node[above]{$1$} (b1);
\end{tikzpicture}
\\\hline
\end{array}
\]
\caption{The fundamental crystals $\TT(\Lambda_1)$ when the underlying Lie algebra is of finite type.}\label{fig:fundcrystal}
\end{figure}

For our purposes, we need only define highest weight crystals for specific fundamental weights.  Namely, define a subset $\widehat P^+$ of $P^+$ by
\[
\widehat P^+ = \begin{cases}
 \ZZ\Lambda_1 \oplus \cdots \oplus \ZZ\Lambda_{n-1} \oplus \ZZ\Lambda_n & \text{ if } \g = A_n, C_n, \\
 \ZZ\Lambda_1 \oplus \cdots \oplus \ZZ\Lambda_{n-1} \oplus \ZZ(2\Lambda_n) & \text{ if } \g = B_n, \\
 \ZZ\Lambda_1 \oplus \cdots \oplus \ZZ\Lambda_{n-1} \oplus \ZZ(\Lambda_n+\Lambda_{n+1}) & \text{ if } \g = D_{n+1}, \\
 \ZZ\Lambda_1 \oplus \ZZ\Lambda_2 & \text{ if } \g = G_2.
\end{cases}
\]
The reason $\widehat P^+$ suffices is due to the constructions we will use in what follows.  In particular, these weights suffice to define the marginally large tableaux model $\TT(\infty)$ of the crystal $B(\infty)$, and thus will be sufficient for us to define our crystal morphism from the rigged configuration model for $B(\infty)$ to $\TT(\infty)$.  These weights also ensure that we will not have any ``spin columns'' in types $B_n$ and $D_{n+1}$.

Recall the fundamental crystals $\TT(\Lambda_1)$ in Figure~\ref{fig:fundcrystal}. Next consider some $\lambda \in \widehat P^+$; we wish to model an element in $B(\lambda)$. It is from these fundamental crystals that the more general crystals will be defined.  For $\lambda \in \widehat P^+$ defined by
\[
\lambda = \begin{cases}
 c_1\Lambda_1 + \cdots + c_{n-1}\Lambda_{n-1} + c_n\Lambda_n & \text{ if } \g = A_n, C_n, \\
 c_1\Lambda_1 + \cdots + c_{n-1}\Lambda_{n-1} + c_n(2\Lambda_n) & \text{ if } \g = B_n, \\
 c_1\Lambda_1 + \cdots + c_{n-1}\Lambda_{n-1} + c_n(\Lambda_n+\Lambda_{n+1}) & \text{ if } \g = D_{n+1}, \\
 c_1\Lambda_1 + c_2\Lambda_2  & \text{ if } \g = G_2,
\end{cases}
\]
let $Y_\lambda$ be the Young diagram with $c_i$ columns of height $i$.  Define $T_\lambda$ to be the unique tableau of shape $Y_\lambda$ such that all entries in the $j$th row of $T_\lambda$ are $j$.   We may embed $T_\lambda$ into $\TT(\Lambda_1)^{\otimes|\lambda|}$, where $|\lambda|$ is the number of boxes in $Y_\lambda$, by reading the tableaux entries from top-to-bottom starting with the right-most column.  Then $f_aT_\lambda$, for $a\in I$, is defined using the tensor product rule and the corresponding fundamental crystal above.  Now let $\TT(\lambda)$ be the set generated by $f_a$ $(a \in I)$ and $T_\lambda$.  This is the set of \emph{classical tableaux} of shape $\lambda$.  The description of type $A_n, B_n, C_n, D_n$ tableaux is due to Kashiwara and Nakashima~\cite{KN94} and type $G_2$ tableaux is due to Kang and Misra \cite{KM94}.
The resulting set is a crystal of semistandard tableaux (with respect to $J(X_n)$) satisfying certain filling conditions.  The explicit description of these crystals may be found in \cite{HK02,KM94,KN94}.

\begin{figure}[ht]
\input{./B2-21.tex}
\caption{The crystal graph $\TT(\Lambda_1+2\Lambda_2)$ of type $B_2$, created using Sage.}
\end{figure}

\subsection{Marginally large tableaux}

Following~\cite{Cliff98}, a semistandard tableau is called \emph{large} if the difference of the number of boxes in the $i$-th row containing the element $i$ and the total number of boxes in the $(i+1)$-th row is positive.  Additionally, following \cite{HL08}, a large (semistandard) tableau is called \emph{marginally large} if the aforementioned difference exactly 1.  Such tableaux are defined for simple Lie algebras $\g$ of type $A_n$, $B_n$, $C_n$, $D_{n+1}$, and $G_2$ in~\cite{HL08}, and of type $E_6$, $E_7$, $E_8$, and $F_4$ in~\cite{HL12}.  The alphabets over which each tableaux from \cite{HL08} are defined are given in Equation \eqref{eq:alphabets}.
%\begin{table}[ht]
%\doublespacing
%\[
%\begin{array}{|c|c|}\hline
%X_n & J(X_n) \\\hline
%A_n & \{1 \prec \cdots \prec n \prec n+1 \} \\\hline
%B_n & \{1 \prec \cdots \prec n \prec 0 \prec \overline n \prec \cdots \prec \overline 1 \} \\\hline
%C_n & \{1 \prec \cdots \prec n \prec \overline n \prec \cdots \prec \overline 1 \} \\\hline
%D_{n+1} & \left\{1 \prec \cdots \prec  n+1, \overline{n+1} \prec \cdots \prec \overline 1 \right\} \\\hline
%G_2 & \{1 \prec 2 \prec 3 \prec 0 \prec \overline3 \prec \overline2 \prec \overline1 \} \\\hline 
%\end{array}
%\]
%\caption{Alphabets for the tableaux in each type}
%\label{table:alphabets}
%\end{table}

The set of marginally large tableaux may be generated through successive application of the Kashiwara lowering operators $f_a$ ($a \in I$) to a specified highest weight vector.  It is in this way that the set of marginally large tableaux work as a combinatorial model for $B(\infty)$.  In certain types, additional conditions are required to precisely define the model, so we give the list of conditions for each type-by-type.

\begin{dfn}\label{ABCDG-tab}
For $X_n = A_n, B_n, C_n, D_{n+1}, G_2$, define the set $\TT(\infty)$ as follows.  (By convention, we assume $n=2$ when $X_n = G_2$.)
\begin{enumerate}
\item The highest weight vector is the unique tableau which consists of $n+1-i$ $i$-colored boxes in the $i$th row from the top (when written using the English convention).
\item Each element is marginally large, semistandard with respect to $J(X_n)$, and consists of exactly $n$ rows.
\end{enumerate}
We also have the following additional type-specific requirements.
\begin{itemize}
\item $X_n = B_n$ $(n\ge 2)$
\begin{enumerate}
\item Elements in the $i$th row are $\preceq \overline\imath$.
\item A $0$-box may occur at most once in a given row.
\end{enumerate}
\item $X_n = C_n$ $(n\ge 2)$
\begin{enumerate}
\item Elements in the $i$th row are $\preceq \overline\imath$.
\end{enumerate}
\item $X_n = D_{n+1}$ $(n\ge 3)$
\begin{enumerate}
\item Elements in the $i$th row are $\preceq \overline\imath$.
\item Both $n+1$ and $\overline{n+1}$ may not appear simultaneously in a single row.
\end{enumerate}
\item $X_n = G_2$
\begin{enumerate}
\item Elements in the second row are $\preceq 3$.
\item A $0$-box may occur at most once in a given row.
\end{enumerate}
\end{itemize}
\end{dfn}

The crystal operators are defined in the same way as in $\TT(\lambda)$.  Namely, read entries of a tableau $T \in \TT(\infty)$ from top-to-bottom in columns starting with the right-most column to obtain an element of $\TT(\Lambda_1)^{\otimes N}$, where $N$ is the number of boxes in $T$.  Then apply the tensor product rule to obtain $f_aT$ and $e_aT$, $a\in I$.

\begin{thm}[{\cite{HL08}}] %HL12
Let $\g$ be a finite simple Lie algebra of type $X_n$.  Then $\TT(\infty) \cong B(\infty)$ as $U_q(\g)$-crystals.
\end{thm}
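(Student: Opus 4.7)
The plan is to establish that $\TT(\infty)$ is a $U_q(\g)$-crystal generated from the distinguished element $T_\infty$ (the unique tableau with exactly $n+1-i$ boxes labeled $i$ in row $i$), and then match it with $B(\infty)$ via the universal property: $\wt(u_\infty) = 0$, $e_a u_\infty = 0$ for all $a \in I$, and $f_{a_k}\cdots f_{a_1} u_\infty \neq 0$ for every sequence. The crystal operations on $\TT(\infty)$ are induced from the column reading-word embedding into $\TT(\Lambda_1)^{\otimes N}$ together with the tensor product rule, and the main leverage comes from the classical tableau models $\TT(\lambda) \cong B(\lambda)$ of \cite{KN94,KM94}.

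The first step is to verify that $\TT(\infty)$ is closed under the Kashiwara operators and that $T_\infty$ satisfies the requisite properties. Closure is a type-by-type bookkeeping argument: when $f_a$ changes an entry of $T \in \TT(\infty)$, one checks that the resulting tableau remains semistandard with respect to $J(X_n)$, still obeys the type-specific filling restrictions (no repeated $0$ in a row for $B_n$ and $G_2$; the $n{+}1/\overline{n{+}1}$ co-occurrence exclusion for $D_{n+1}$; and so on), and still has exactly one more $i$-box in row $i$ than the total length of row $i+1$. The margin-of-one property is crucial here: it lets one prepend a fresh $a$-box to a row whenever an $f_a$ move would otherwise cost that row its marginal excess. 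For $T_\infty$ itself, reading column-by-column from right to left yields a word in which every letter of color $a$ already occupies the ``rightmost'' slot of its bracket sequence, so $\varepsilon_a(T_\infty) = 0$; combined with $\wt(T_\infty) = 0$ and with the non-vanishing of $f_{a_k}\cdots f_{a_1} T_\infty$ (which follows from the same marginal-excess argument), the tableau $T_\infty$ plays the role of $u_\infty$.

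The main obstacle is to match the combinatorial crystal $\TT(\infty)$ with the abstract $B(\infty)$. The cleanest route is via stabilization against the classical models: for a word $w = (a_1, \ldots, a_k)$ and a weight $\lambda \in \widehat{P}^+$ sufficiently dominant, the tableaux $f_{a_k}\cdots f_{a_1} T_\lambda \in \TT(\lambda)$ and $f_{a_k}\cdots f_{a_1} T_\infty \in \TT(\infty)$ differ only by the ``boundary'' boxes that $T_\lambda$ carries beyond $T_\infty$, and these boundary boxes are never reached by the Kashiwara operators once $\lambda$ is sufficiently dominant (this is precisely what the marginally large condition buys). Since $\TT(\lambda) \cong B(\lambda)$ by \cite{KN94,KM94}, this produces a compatible system of crystal morphisms $\TT(\infty) \to \TT(\lambda)$ intertwining the $f_a$; Kashiwara's realization of $B(\infty)$ as the limit (in the appropriate sense) of the $B(\lambda)$ then forces $\TT(\infty) \cong B(\infty)$ as $U_q(\g)$-crystals.
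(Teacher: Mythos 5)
This theorem is quoted from \cite{HL08}; the present paper gives no proof of its own, so the only meaningful comparison is against the cited source, and your sketch does follow essentially the strategy used there: check closure of $\TT(\infty)$ under the Kashiwara operators via the largeness condition, organize large tableaux into equivalence classes differing by basic columns (exactly the machinery this paper recalls at the end of Section~\ref{sec:tableaux}), and identify $\TT(\infty)$ with the direct limit of the classical models $\TT(\lambda)\cong B(\lambda)$ by showing that $f_{a_k}\cdots f_{a_1}T_\lambda$ and $f_{a_k}\cdots f_{a_1}T_\infty$ agree up to basic columns once $\lambda$ is sufficiently dominant. Two cautions are worth making explicit. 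First, the three properties you list for $u_\infty$ (weight zero, killed by every $e_a$, no lowering path vanishes) are necessary but do \emph{not} characterize $B(\infty)$ among abstract $U_q(\g)$-crystals --- many non-isomorphic crystals satisfy all three --- so the ``universal property'' framing of your opening paragraph cannot carry the isomorphism by itself; the entire burden rests on the stabilization argument of your final paragraph, which is the correct mechanism. Second, $\wt(T_\infty)=0$ only after the weight on $\TT(\infty)$ is renormalized by subtracting the weight of the ground-state tableau (the reading word of $T_\infty$ has nonzero weight in $\TT(\Lambda_1)^{\otimes N}$); relatedly, the transition maps $B(\lambda)\to B(\mu)$ sending $f_{a_k}\cdots f_{a_1}u_\lambda\mapsto f_{a_k}\cdots f_{a_1}u_\mu$ do not preserve weight, and it is precisely this shift that must be tracked when invoking the realization of $B(\infty)$ as the limit. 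With those points spelled out, the outline is sound and consistent with \cite{HL08}.
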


\begin{ex}
Consider type $A_3$.  The top part of the crystal graph $\TT(\infty)$ is shown in Figure~\ref{A3mlt} down to depth 3.  The notation used at the vertices is condensed so that all place holding $i$-boxes in the $i$th row are removed.  If the resulting reduction yields a row with no boxes, then that row appears with one box containing $*$.  The graph in the figure is modified output from Sage \cite{combinat,sage}.
\begin{figure}[ht]
\[
\begin{tikzpicture}[xscale=.25,yscale=.5, every node/.style={scale=0.6}]
\node (3+2+1+2+1+3+1+3+1+3+1+1) at (774bp,22bp) [draw,draw=none] {$\young(*,333,*)$};
  \node (3+2+1+4+2+1+2+1+1+2+2) at (948bp,22bp) [draw,draw=none] {$\young(22,*,4)$};
  \node (3+2+1+4+2+1+2+1+1) at (479bp,222bp) [draw,draw=none] {$\young(*,*,4)$};
  \node (3+2+1+4+2+1+4+2+1+2+1+1) at (338bp,122bp) [draw,draw=none] {$\young(*,*,44)$};
  \node (3+2+1+2+1+3+1+3+1+1+2) at (861bp,22bp) [draw,draw=none] {$\young(2,33,*)$};
  \node (3+2+1+4+2+1+2+1+3+1+3+1+1) at (34bp,22bp) [draw,draw=none] {$\young(*,33,4)$};
  \node (3+2+1+2+1+1) at (771bp,322bp) [draw,draw=none] {$\young(*,*,*)$};
  \node (3+2+1+2+1+1+2+3) at (1204bp,22bp) [draw,draw=none] {$\young(23,*,*)$};
  \node (3+2+1+2+1+3+1+1+2) at (907bp,122bp) [draw,draw=none] {$\young(2,3,*)$};
  \node (3+2+1+2+1+4+1+1) at (565bp,122bp) [draw,draw=none] {$\young(*,4,*)$};
  \node (3+2+1+2+1+3+1+1) at (771bp,222bp) [draw,draw=none] {$\young(*,3,*)$};
  \node (3+2+1+4+2+1+4+2+1+2+1+3+1+1) at (208bp,22bp) [draw,draw=none] {$\young(*,3,44)$};
  \node (3+2+1+4+2+1+2+1+3+1+1) at (156bp,122bp) [draw,draw=none] {$\young(*,3,4)$};
  \node (3+2+1+4+2+1+2+1+3+1+1+2) at (121bp,22bp) [draw,draw=none] {$\young(2,3,4)$};
  \node (3+2+1+2+1+1+2+2+2) at (1122bp,22bp) [draw,draw=none] {$\young(222,*,*)$};
  \node (3+2+1+4+2+1+2+1+1+2) at (692bp,122bp) [draw,draw=none] {$\young(2,*,4)$};
  \node (3+2+1+4+2+1+2+1+1+3) at (692bp,22bp) [draw,draw=none] {$\young(3,*,4)$};
  \node (3+2+1+4+2+1+2+1+4+1+1) at (464bp,22bp) [draw,draw=none] {$\young(*,4,4)$};
  \node (3+2+1+2+1+3+1+4+1+1) at (540bp,22bp) [draw,draw=none] {$\young(*,34,*)$};
  \node (3+2+1+4+2+1+4+2+1+2+1+1+2) at (382bp,22bp) [draw,draw=none] {$\young(2,*,44)$};
  \node (3+2+1+2+1+3+1+1+3) at (1346bp,22bp) [draw,draw=none] {$\young(3,3,*)$};
  \node (3+2+1+2+1+3+1+1+2+2) at (1035bp,22bp) [draw,draw=none] {$\young(22,3,*)$};
  \node (3+2+1+2+1+4+1+1+2) at (616bp,22bp) [draw,draw=none] {$\young(2,4,*)$};
  \node (3+2+1+4+2+1+4+2+1+4+2+1+2+1+1) at (295bp,22bp) [draw,draw=none] {$\young(*,*,444)$};
  \node (3+2+1+2+1+1+2+2) at (998bp,122bp) [draw,draw=none] {$\young(22,*,*)$};
  \node (3+2+1+2+1+1+4) at (1275bp,22bp) [draw,draw=none] {$\young(4,*,*)$};
  \node (3+2+1+2+1+1+3) at (1247bp,122bp) [draw,draw=none] {$\young(3,*,*)$};
  \node (3+2+1+2+1+1+2) at (910bp,222bp) [draw,draw=none] {$\young(2,*,*)$};
  \node (3+2+1+2+1+3+1+3+1+1) at (771bp,122bp) [draw,draw=none] {$\young(*,33,*)$};
  \draw [red,->] (3+2+1+4+2+1+2+1+1) ..controls (399.64bp,196.92bp) and (262.68bp,155.37bp)  .. (3+2+1+4+2+1+2+1+3+1+1);
  \definecolor{strokecol}{rgb}{0.0,0.0,0.0};
  \pgfsetstrokecolor{strokecol}
  \draw (356bp,172bp) node {$2$};
  \draw [red,->] (3+2+1+4+2+1+4+2+1+2+1+1) ..controls (291.16bp,85.69bp) and (264.58bp,65.649bp)  .. (3+2+1+4+2+1+4+2+1+2+1+3+1+1);
  \draw (294bp,72bp) node {$2$};
  \draw [red,->] (3+2+1+4+2+1+2+1+3+1+1) ..controls (112.12bp,85.756bp) and (87.314bp,65.826bp)  .. (3+2+1+4+2+1+2+1+3+1+3+1+1);
  \draw (115bp,72bp) node {$2$};
  \draw [green,->] (3+2+1+2+1+3+1+3+1+1) ..controls (738.03bp,102.79bp) and (733.98bp,101.21bp)  .. (730bp,100bp) .. controls (663.63bp,79.871bp) and (633.18bp,119.74bp)  .. (575bp,82bp) .. controls (564.41bp,75.132bp) and (556.59bp,63.841bp)  .. (3+2+1+2+1+3+1+4+1+1);
  \draw (584bp,72bp) node {$3$};
  \draw [red,->] (3+2+1+2+1+3+1+3+1+1) ..controls (771.32bp,88.935bp) and (771.57bp,74.696bp)  .. (772bp,62bp) .. controls (772.09bp,59.298bp) and (772.2bp,56.492bp)  .. (3+2+1+2+1+3+1+3+1+3+1+1);
  \draw (781bp,72bp) node {$2$};
  \draw [red,->] (3+2+1+4+2+1+2+1+1+2) ..controls (692bp,86.834bp) and (692bp,69.192bp)  .. (3+2+1+4+2+1+2+1+1+3);
  \draw (701bp,72bp) node {$2$};
  \draw [red,->] (3+2+1+2+1+1+2) ..controls (993.38bp,196.75bp) and (1145.2bp,152.6bp)  .. (3+2+1+2+1+1+3);
  \draw (1118bp,172bp) node {$2$};
  \draw [blue,->] (3+2+1+4+2+1+2+1+1+2) ..controls (724.98bp,102.85bp) and (729.03bp,101.24bp)  .. (733bp,100bp) .. controls (790.35bp,82.045bp) and (811.99bp,106.18bp)  .. (867bp,82bp) .. controls (879.71bp,76.413bp) and (879.92bp,70.374bp)  .. (891bp,62bp) .. controls (896.53bp,57.822bp) and (902.46bp,53.542bp)  .. (3+2+1+4+2+1+2+1+1+2+2);
  \draw (900bp,72bp) node {$1$};
  \draw [blue,->] (3+2+1+2+1+3+1+3+1+1) ..controls (810.84bp,99.845bp) and (822.36bp,91.607bp)  .. (831bp,82bp) .. controls (838.57bp,73.582bp) and (844.76bp,62.906bp)  .. (3+2+1+2+1+3+1+3+1+1+2);
  \draw (854bp,72bp) node {$1$};
  \draw [green,->] (3+2+1+2+1+1) ..controls (703.16bp,298.23bp) and (574.25bp,254.97bp)  .. (3+2+1+4+2+1+2+1+1);
  \draw (661bp,272bp) node {$3$};
  \draw [green,->] (3+2+1+4+2+1+2+1+1) ..controls (432.9bp,188.96bp) and (400.42bp,166.38bp)  .. (3+2+1+4+2+1+4+2+1+2+1+1);
  \draw (431bp,172bp) node {$3$};
  \draw [green,->] (3+2+1+2+1+1+2) ..controls (868.32bp,203.78bp) and (844.29bp,193.07bp)  .. (824bp,182bp) .. controls (809.61bp,174.15bp) and (807.89bp,168.87bp)  .. (793bp,162bp) .. controls (767.72bp,150.34bp) and (758.83bp,154.39bp)  .. (733bp,144bp) .. controls (732.16bp,143.66bp) and (731.31bp,143.31bp)  .. (3+2+1+4+2+1+2+1+1+2);
  \draw (833bp,172bp) node {$3$};
  \draw [red,->] (3+2+1+2+1+3+1+1+2) ..controls (917.39bp,88.537bp) and (918.89bp,73.787bp)  .. (913bp,62bp) .. controls (910.51bp,57.016bp) and (907.04bp,52.504bp)  .. (3+2+1+2+1+3+1+3+1+1+2);
  \draw (927bp,72bp) node {$2$};
  \draw [green,->] (3+2+1+4+2+1+4+2+1+2+1+1) ..controls (322.89bp,86.574bp) and (314.97bp,68.505bp)  .. (3+2+1+4+2+1+4+2+1+4+2+1+2+1+1);
  \draw (330bp,72bp) node {$3$};
  \draw [green,->] (3+2+1+2+1+4+1+1) ..controls (532.01bp,97.618bp) and (522.24bp,89.835bp)  .. (514bp,82bp) .. controls (504.17bp,72.655bp) and (494.32bp,61.5bp)  .. (3+2+1+4+2+1+2+1+4+1+1);
  \draw (523bp,72bp) node {$3$};
  \draw [blue,->] (3+2+1+2+1+3+1+1+2) ..controls (962.58bp,98.594bp) and (997.39bp,84.27bp)  .. (1000bp,82bp) .. controls (1009.1bp,74.038bp) and (1016.6bp,63.075bp)  .. (3+2+1+2+1+3+1+1+2+2);
  \draw (1025bp,72bp) node {$1$};
  \draw [green,->] (3+2+1+4+2+1+2+1+3+1+1) ..controls (174.34bp,86.444bp) and (184.04bp,68.16bp)  .. (3+2+1+4+2+1+4+2+1+2+1+3+1+1);
  \draw (196bp,72bp) node {$3$};
  \draw [blue,->] (3+2+1+2+1+4+1+1) ..controls (589.04bp,94.572bp) and (593.56bp,88.305bp)  .. (597bp,82bp) .. controls (601.81bp,73.189bp) and (605.63bp,62.967bp)  .. (3+2+1+2+1+4+1+1+2);
  \draw (615bp,72bp) node {$1$};
  \draw [blue,->] (3+2+1+2+1+1+2+2) ..controls (1036.8bp,96.677bp) and (1047.6bp,89.308bp)  .. (1057bp,82bp) .. controls (1069.5bp,72.275bp) and (1082.6bp,60.714bp)  .. (3+2+1+2+1+1+2+2+2);
  \draw (1089bp,72bp) node {$1$};
  \draw [red,->] (3+2+1+2+1+1+3) ..controls (1282.4bp,85.925bp) and (1301.8bp,66.774bp)  .. (3+2+1+2+1+3+1+1+3);
  \draw (1315bp,72bp) node {$2$};
  \draw [green,->] (3+2+1+2+1+1+3) ..controls (1256.8bp,86.704bp) and (1261.9bp,68.849bp)  .. (3+2+1+2+1+1+4);
  \draw (1273bp,72bp) node {$3$};
  \draw [blue,->] (3+2+1+4+2+1+2+1+1) ..controls (539.52bp,220.66bp) and (608.58bp,214.74bp)  .. (655bp,182bp) .. controls (665.32bp,174.72bp) and (673.37bp,163.57bp)  .. (3+2+1+4+2+1+2+1+1+2);
  \draw (683bp,172bp) node {$1$};
  \draw [red,->] (3+2+1+2+1+1+2+2) ..controls (1048.4bp,104.27bp) and (1077.4bp,93.62bp)  .. (1102bp,82bp) .. controls (1123.9bp,71.66bp) and (1147.3bp,58.281bp)  .. (3+2+1+2+1+1+2+3);
  \draw (1149bp,72bp) node {$2$};
  \draw [green,->] (3+2+1+2+1+3+1+1+2) ..controls (857.55bp,106.68bp) and (830.23bp,96.367bp)  .. (809bp,82bp) .. controls (798.85bp,75.129bp) and (800.92bp,67.567bp)  .. (790bp,62bp) .. controls (738.43bp,35.721bp) and (714.4bp,60.301bp)  .. (3+2+1+2+1+4+1+1+2);
  \draw (818bp,72bp) node {$3$};
  \draw [blue,->] (3+2+1+2+1+3+1+1) ..controls (810.54bp,203.49bp) and (830.16bp,193.41bp)  .. (846bp,182bp) .. controls (858.51bp,172.99bp) and (870.98bp,161.5bp)  .. (3+2+1+2+1+3+1+1+2);
  \draw (879bp,172bp) node {$1$};
  \draw [green,->] (3+2+1+4+2+1+2+1+1+2) ..controls (644.39bp,109.55bp) and (619.87bp,104.04bp)  .. (598bp,100bp) .. controls (543.67bp,89.97bp) and (526.8bp,101.22bp)  .. (475bp,82bp) .. controls (454.24bp,74.296bp) and (433.23bp,61.488bp)  .. (3+2+1+4+2+1+4+2+1+2+1+1+2);
  \draw (484bp,72bp) node {$3$};
  \draw [red,->] (3+2+1+2+1+4+1+1) ..controls (543.51bp,94.763bp) and (540.01bp,88.446bp)  .. (538bp,82bp) .. controls (535.2bp,73.044bp) and (534.66bp,62.949bp)  .. (3+2+1+2+1+3+1+4+1+1);
  \draw (547bp,72bp) node {$2$};
  \draw [green,->] (3+2+1+2+1+1+2+2) ..controls (980.37bp,86.444bp) and (971.04bp,68.16bp)  .. (3+2+1+4+2+1+2+1+1+2+2);
  \draw (987bp,72bp) node {$3$};
  \draw [red,->] (3+2+1+2+1+1) ..controls (771bp,286.83bp) and (771bp,269.19bp)  .. (3+2+1+2+1+3+1+1);
  \draw (780bp,272bp) node {$2$};
  \draw [red,->] (3+2+1+2+1+3+1+1) ..controls (771bp,186.83bp) and (771bp,169.19bp)  .. (3+2+1+2+1+3+1+3+1+1);
  \draw (780bp,172bp) node {$2$};
  \draw [blue,->] (3+2+1+2+1+1) ..controls (812.44bp,291.78bp) and (850.04bp,265.27bp)  .. (3+2+1+2+1+1+2);
  \draw (862bp,272bp) node {$1$};
  \draw [green,->] (3+2+1+2+1+3+1+1) ..controls (719.19bp,211.77bp) and (672.91bp,200.92bp)  .. (637bp,182bp) .. controls (621.13bp,173.64bp) and (605.49bp,161.44bp)  .. (3+2+1+2+1+4+1+1);
  \draw (646bp,172bp) node {$3$};
  \draw [blue,->] (3+2+1+2+1+1+2) ..controls (941.38bp,186.05bp) and (958.38bp,167.12bp)  .. (3+2+1+2+1+1+2+2);
  \draw (971bp,172bp) node {$1$};
  \draw [blue,->] (3+2+1+4+2+1+2+1+3+1+1) ..controls (141.33bp,94.531bp) and (138.31bp,88.121bp)  .. (136bp,82bp) .. controls (132.58bp,72.95bp) and (129.69bp,62.827bp)  .. (3+2+1+4+2+1+2+1+3+1+1+2);
  \draw (145bp,72bp) node {$1$};
  \draw [blue,->] (3+2+1+4+2+1+4+2+1+2+1+1) ..controls (353.46bp,86.574bp) and (361.57bp,68.505bp)  .. (3+2+1+4+2+1+4+2+1+2+1+1+2);
  \draw (373bp,72bp) node {$1$};
  \draw [blue,->] (3+2+1+2+1+1+3) ..controls (1231.9bp,86.574bp) and (1224bp,68.505bp)  .. (3+2+1+2+1+1+2+3);
  \draw (1239bp,72bp) node {$1$};
\end{tikzpicture}
\]
\caption{The top part of $\TT(\infty)$ for type $A_3$.}
\label{A3mlt}
\end{figure}
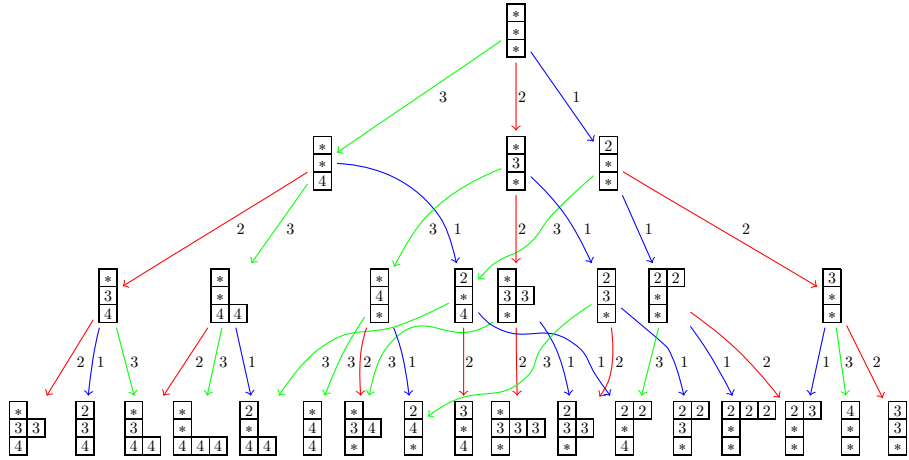
\end{ex}

%Since $B(\infty)$ is a direct limit of the highest weight crystals $B(\lambda)$, there is a natural embedding $B(\infty) \longrightarrow B(\lambda)$ for each $\lambda \in P^+$.  As such, we may define a function $\TT(\infty) \longrightarrow \TT(\lambda)$, where $\TT(\lambda)$ is the tableaux model for $B(\lambda)$ defined in \cite{KN94}.  Moreover, for each $T$ in $\TT(\infty)$, there is a minimal $\lambda$ (with respect to usual partial order on $P^+$) such that $T$ is nonzero when projected to $\TT(\lambda)$.  Denote this weight by $\lambda_T$.  

Following \cite{HL12}, we call a column of any tableau $T$ a \emph{basic column} if it has height $r$ and is filled with $(1, \ldots, r)$. From~\cite{HL08}, consider the set
\[
\TT^L := \left\{T \in \bigcup_{\lambda \in \widehat P_+} \TT(\lambda) \mid T \text{ is large} \right\}
\]
We may partition $\TT^L$ into equivalence classes by saying $T \sim T'$ if they differ only by basic columns.  Note that $f_a T \neq 0$ for all $T \in \TT^L$. If $f_a T$ is large, then for all $S \in [T]$ such that $f_a S$ is large, we have $f_a S \in [f_a T]$. In other words, the crystal operators essentially preserve equivalence classes. Moreover, if $f_a S \notin [f_a T]$, then $f_a S$ differs from a unique element in $[f_a T]$ only by adding a single basic column of height $a$. Additionally, every equivalence class has exactly one marginally large tableaux. The details of these statements can be found in~\cite{HL08}.

\section{Rigged configurations}
\label{sec:rigcon}

\subsection{Crystal structure}

%{\color{blue} Change this entire section to assume all tensor products of KR cystals are of the form $B = \bigotimes_{i=1}^N B^{r_i, 1}$.}

We first need to consider an affine type $\widetilde{\g}$ whose classical subalgebra is $\g$. However we do not do so in the usual fasion by taking the untwisted affine algebra, but instead consider those given by Table~\ref{table:affine}.
\begin{table}[ht]
\doublespacing
\[
\begin{array}{|c|c|c|c|c|c|}\hline
\g & A_n & B_n & C_n & D_{n+1} & G_2 \\\hline
\widetilde{\g} & A_n^{(1)} & D_{n+1}^{(2)} & A_{2n-1}^{(2)} & D_{n+1}^{(1)} & D_4^{(3)} \\\hline
\end{array}
\]
\singlespacing
\caption{The association of affine type $\widetilde{\g}$ with a classical type $\g$ used here.}
\label{table:affine}
\end{table}

Set $\HH = I \times \ZZ_{>0}$.  Consider a multiplicity array
\[
L = \big(L^{(a)} \in \ZZ_{\ge0} \mid a \in I \big)
\]
and a dominant integral weight $\lambda$ for $\g$.  We call a sequence of partitions $\nu = \{ \nu^{(a)} \mid a \in I \}$ an $(L, \lambda)$-\emph{configuration} if 
\begin{equation}\label{LL-config}
\sum_{(a,i)\in\HH} i m_i^{(a)} \alpha_a = \sum_{a \in I} L^{(a)} \Lambda_a - \lambda,
\end{equation}
where $m_i^{(a)}$ is the number of parts of length $i$ in the partition $\nu^{(a)}$ and $\{\alpha_a \mid a \in I\}$ are the simple roots for $\g$.  The set of all such $(L,\lambda)$-configurations is denoted $C(L,\lambda)$.
%Let $c_a$ and $c_a^{\vee}$ denote the Kac and dual Kac labels respectively given in Table Aff~1-3~\cite{kac90} of type $\widetilde{\g}$, and let
%\[
%t_a = \max\left(\frac{c_a}{c_a^{\vee}}, c_0^{\vee}\right) \quad\quad t_a^{\vee} = \max\left(\frac{c_a^{\vee}}{c_a}, c_0\right), \quad\quad a \in I.
%\]
To an element $\nu \in C(L,\lambda)$, define the \emph{vacancy numbers} of $\nu$ to be 
\begin{equation}
\label{eq:vacancy}
p_i^{(a)}(\nu) = p_i^{(a)} = L^{(a)} - \sum_{(b,j) \in \HH_0} A_{ab} \min(i, j) m_j^{(b)}.
\end{equation}

Recall that a partition is a multiset of integers (typically sorted in weakly decreasing order).  More generally, a \emph{rigged partition} is a multiset of pairs of integers $(i, x)$ such that $i > 0$ (typically sorted under weakly decreasing lexicographic order).  Each $(i,x)$ is called a \emph{string}, while $i$ is called the length or size of the string and $x$ is the \emph{rigging}, \emph{label}, or \emph{quantum number} of the string.  Finally, a \emph{rigged configuration} is a pair $(\nu, J)$ where $\nu \in C(L,\lambda)$ and $J = \big( J_i^{(a)} \big)_{(a, i) \in \HH}$, where each $J_i^{(a)}$ is a weakly decreasing sequence of riggings of strings of length $i$ in $\nu^{(a)}$. We call a rigged configuration \emph{valid} if every label $x \in J_i^{(a)}$ satisfies the inequality $p_i^{(a)} \geq x$ for all $(a, i) \in \HH$. We say a rigged configuration is \emph{highest weight} if $x \geq 0$ for all labels $x$. Define the \emph{colabel} or \emph{coquantum number} of a string $(i,x)$ to be $p_i^{(a)} - x$.  
For brevity, we will often denote the $a$th part of $(\nu,J)$ by $(\nu,J)^{(a)}$ (as opposed to $(\nu^{(a)},J^{(a)})$).

\begin{dfn}
Let $(\nu_\emptyset,J_\emptyset)$ be the rigged configuration with empty partition and empty riggings and let $L$ be the multiplicity array of all zeros.
Define $\RC(\infty)$ to be the graph generated by $(\nu_\emptyset,J_\emptyset)$, $e_a$, and $f_a$, for $a \in I$, where $e_a$ and $f_a$ acts on elements $(\nu,J)$ in $\RC(\infty)$ as follows. Fix $a \in I$ and let $x$ be the smallest label of $(\nu,J)^{(a)}$.
\begin{itemize}
\item[$e_a$:] If $x \geq 0$, then set $e_a(\nu,J) = 0$. Otherwise, let $\ell$ be the minimal length of all strings in $(\nu,J)^{(a)}$ which have label $x$.  The rigged configuration $e_a(\nu,J)$ is obtained by replacing the string $(\ell, x)$ with the string $(\ell-1, x+1)$ and changing all other labels so that all colabels remain fixed.
\item[$f_a$:] If $x > 0$, then add the string $(1,-1)$ to $(\nu,J)^{(a)}$.  Otherwise, let $\ell$ be the maximal length of all strings in $(\nu,J)^{(a)}$ which have label $x$.   Replace the string $(\ell, x)$ by the string $(\ell+1, x-1)$ and change all other labels so that all colabels remain fixed.
\end{itemize}
The remaining crystal structure on $\RC(\infty)$ is given by
\begin{subequations}\label{epphiwt}
\begin{align}
\label{Binf_ep} \varepsilon_a(\nu,J) &= \max\{ k \in \ZZ_{\ge0} \mid  e_a^k(\nu,J) \neq 0 \}, \\ 
\label{Binf_phi} \varphi_a(\nu,J) &= \varepsilon_a(\nu,J) + \langle h_a,\wt(\nu,J)\rangle, \\ 
\label{Binf_wt} \wt(\nu,J) &= -\sum_{(a,i)\in\HH} im_i^{(a)}\alpha_a = -\sum_{a\in I} |\nu^{(a)}|\alpha_a.
\end{align}
\end{subequations}
\end{dfn}

It is worth noting that, in this case, the definition of the vacancy numbers reduces to
\begin{equation}
p_i^{(a)}(\nu) = p_i^{(a)} = -\sum_{(b,j) \in \HH_0} A_{ab} \min(i, j) m_j^{(b)}.
\end{equation}

\begin{ex}
Rigged configurations will be shown as a sequence of partitions where the vacancy numbers will be written on the left and the corresponding rigging on the right. Let $\g$ be of type $A_5$ and $(\nu,J) = f_5 f_4 f_5 f_2 f_1 f_2 f_3(\nu_{\emptyset}, J_{\emptyset})$ be the rigged configuration
\begin{align*}
(\nu,J) &= 
\begin{tikzpicture}[scale=.35,baseline=-18]
 \rpp{1}{-1}{-1}
\begin{scope}[xshift=5cm]
 \rpp{2}{-1}{-2}
\end{scope}
\begin{scope}[xshift=12cm]
 \rpp{1}{1}{0}
\end{scope}
\begin{scope}[xshift=16cm]
 \rpp{1}{-1}{0}
\end{scope}
\begin{scope}[xshift=21cm]
 \rpp{2}{-1}{-3}
\end{scope}
\end{tikzpicture}
\intertext{Then $\wt(\nu,J) = -\alpha_1 - 2\alpha_2 - \alpha_3 - \alpha_4 - 2\alpha_5$,}
e_2(\nu,J) &= 
\begin{tikzpicture}[scale=.35,baseline=-18]
 \rpp{1}{-1}{-1}
\begin{scope}[xshift=5cm]
 \rpp{1}{0}{0}
\end{scope}
\begin{scope}[xshift=12cm]
 \rpp{1}{1}{0}
\end{scope}
\begin{scope}[xshift=16cm]
 \rpp{1}{-1}{0}
\end{scope}
\begin{scope}[xshift=21cm]
 \rpp{2}{-1}{-3}
\end{scope}
\end{tikzpicture}
\intertext{and}
f_2(\nu,J) &= 
\begin{tikzpicture}[scale=.35,baseline=-18]
 \rpp{1}{-1}{-1}
\begin{scope}[xshift=5cm]
 \rpp{3}{-2}{-4}
\end{scope}
\begin{scope}[xshift=12cm]
 \rpp{1}{1}{0}
\end{scope}
\begin{scope}[xshift=16cm]
 \rpp{1}{-1}{0}
\end{scope}
\begin{scope}[xshift=21cm]
 \rpp{2}{-1}{-3}
\end{scope}
\end{tikzpicture}
\end{align*}
\end{ex}

\begin{thm}[\cite{SS2015}]
\label{thm:RCinf_nsl}
The map defined by $(\nu_\emptyset,J_\emptyset) \mapsto u_\infty$, where $u_\infty$ is the highest weight element of $B(\infty)$, is a $U_q(\g)$-crystal isomorphism $\RC(\infty) \cong B(\infty)$.
\end{thm}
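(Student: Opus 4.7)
The plan is to verify that $\RC(\infty)$ satisfies Kashiwara and Saito's axiomatic characterization of $B(\infty)$, which reduces the problem to checking a handful of structural properties together with the existence of a compatible $*$-crystal structure on $\RC(\infty)$.

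First, one checks that the operators $e_a$ and $f_a$ on $\RC(\infty)$ are well-defined, meaning the instruction ``change all other labels so that all colabels remain fixed'' is consistent with the vacancy-number formula \eqref{eq:vacancy}. This is a direct computation: replacing the string $(\ell, x)$ by $(\ell+1, x-1)$ in $(\nu,J)^{(a)}$ changes the vacancy number $p_i^{(b)}$ by $A_{ab}\bigl(\min(i,\ell) - \min(i,\ell+1)\bigr)$, and the prescribed rigging shifts on the other strings compensate this change exactly. Using the formulas \eqref{Binf_ep}--\eqref{Binf_wt} one then verifies the abstract crystal axioms.

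Next, observe that $(\nu_\emptyset, J_\emptyset)$ has weight $0$, is annihilated by every $e_a$, and every composition $f_{a_k}\cdots f_{a_1}(\nu_\emptyset, J_\emptyset)$ is nonzero: if the current smallest label on component $a$ is $x=0$ with maximal string $(\ell,0)$, the new string $(\ell+1,-1)$ automatically satisfies $-1 \le p_{\ell+1}^{(a)}$ after applying $f_a$, while if $x>0$ one simply adds a new string $(1,-1)$. These properties match those of $u_\infty \in B(\infty)$.

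The main step is then to produce a second crystal structure $\{e_a^*, f_a^*\}$ on $\RC(\infty)$ via a Lusztig-type involution and to verify Kashiwara--Saito's tensor-product axioms, most notably $\varepsilon_a(b) + \varepsilon_a^*(b) \ge \langle h_a, \wt(b)\rangle$ and the compatibility of $e_a e_b^* b$ with $e_b^* e_a b$ for $a \ne b$. The natural candidate replaces each label with its colabel $p_i^{(a)} - x$, so that $e_a^*, f_a^*$ act by the same recipe on coquantum numbers. Verifying these axioms, which amounts to a careful analysis of how operators of distinct colours interact with the string structure of a rigged configuration, is where I expect the bulk of the technical work to lie and is the main obstacle.

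As an alternative, avoiding the $*$-structure entirely, one could construct compatible embeddings $\RC(\lambda) \hookrightarrow \RC(\infty) \otimes T_\lambda$ for each $\lambda \in \widehat P^+$, using the existing isomorphisms $\RC(\lambda) \cong B(\lambda)$ from \cite{S06, SS15, DS06, Sakamoto13}; concretely one sends each label $x$ to $x - p_i^{(a)}(\nu;\lambda) + p_i^{(a)}(\nu;\infty)$, so that the Kashiwara embedding $B(\lambda) \hookrightarrow B(\infty) \otimes T_\lambda$ is mirrored on the rigged configuration side. After checking that these embeddings commute with the $e_a, f_a$ action, passing to the direct limit in $\lambda$ would give the desired isomorphism $\RC(\infty) \cong B(\infty)$.
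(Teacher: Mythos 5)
First, a point of order: the present paper does not prove this theorem --- it is quoted from~\cite{SS2015} --- so there is no internal proof here to compare against. Measured against the proof in that reference, your primary strategy is the right one: \cite{SS2015} establishes $\RC(\infty) \cong B(\infty)$ in simply-laced types precisely by verifying the Kashiwara--Saito characterization, with the $*$-crystal structure on rigged configurations given by replacing each rigging $x$ with its corigging $p_i^{(a)} - x$, and your preliminary steps (well-definedness of $e_a, f_a$ against~\eqref{eq:vacancy}, the highest-weight properties of $(\nu_\emptyset, J_\emptyset)$) are sound. One small correction there: in $\RC(\infty)$ the non-vanishing of $f_{a_k}\cdots f_{a_1}(\nu_\emptyset,J_\emptyset)$ holds by construction, since $f_a$ never returns $0$; the inequality $-1 \le p_{\ell+1}^{(a)}$ you invoke is neither needed nor true in general, as vacancy numbers in $\RC(\infty)$ are typically negative (see the examples in Section~\ref{sec:rigcon}).

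The genuine gap is that the entire content of the theorem lives in the step you explicitly defer: verifying the Kashiwara--Saito axioms for the corigging involution, in particular the inequality $\varepsilon_a(b) + \varepsilon_a^*(b) + \inner{h_a}{\wt(b)} \ge 0$ (note the sign --- the version you wrote, $\varepsilon_a(b) + \varepsilon_a^*(b) \ge \inner{h_a}{\wt(b)}$, already fails for $b = f_{a'}u_\infty$ with $a' \neq a$ and $A_{aa'} < 0$) together with the commutation relations between $e_a$ and $e_{a'}^*$. Declaring this ``the main obstacle'' leaves the theorem unestablished. Moreover, for the non-simply-laced types this paper actually needs ($B_n$, $C_n$, $G_2$), the reference does not carry out that verification directly: it deduces those cases from the simply-laced ones via virtual crystals and diagram foldings, a step absent from your outline and not a routine one, since when $A_{ab}A_{ba} > 1$ the interaction between strings of adjacent colours is substantially more delicate. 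Your alternative route --- compatible embeddings $\RC(\lambda) \longhookrightarrow \RC(\infty) \otimes T_\lambda$ mirroring the Kashiwara embeddings, followed by a direct limit --- is viable in principle and closer in spirit to how the present paper actually uses the theorem, but as written the intertwining of the crystal operators under the label shift $x \mapsto x - \inner{h_a}{\lambda}$ is exactly what would need to be proved, and one must additionally show that the limit exhausts $B(\infty)$ rather than a proper subcrystal.
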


We can extend the crystal structure on rigged configurations to model $B(\lambda)$ as follows. Consider a multiplicity array $L$ such that $\lambda = \sum_{a \in I} L^{(a)} \Lambda_a$. We first we modify the definition of the weight to be $\wt'(\nu, J) = \wt(\nu, J) + \lambda$. Next, modify the crystal operators by saying $f_a(\nu, J) = 0$ if $\varphi_a(\nu, J) = 0$. Equivalently, we say $f_a(\nu, J) = 0$ if the result under $f_a$ above is not a valid rigged configuration. Let $\RC(\lambda)$ denote the closure of $(\nu_{\emptyset}, J_{\emptyset})$ under these modified crystal operators. This arises from the natural projection of $B(\infty) \longrightarrow B(\lambda)$.

\begin{thm}[\cite{SS15}]
Let $\g$ be of finite type. Then
$
\RC(\lambda) \iso B(\lambda).
$
\end{thm}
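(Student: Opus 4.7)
The plan is to leverage Theorem~\ref{thm:RCinf_nsl} together with the standard fact that $B(\lambda)$ arises as a quotient of $B(\infty)$. Specifically, there is a natural crystal morphism $\pi_\lambda \colon B(\infty) \longrightarrow B(\lambda) \sqcup \{0\}$ sending $u_\infty \mapsto u_\lambda$, intertwining $f_a$ on $B(\infty)$ with $f_a$ on $B(\lambda)$ (where the latter returns $0$ precisely when $\varphi_a = 0$), and satisfying $\wt \circ \pi_\lambda = \wt + \lambda$. Composing with the isomorphism $\Psi \colon \RC(\infty) \xrightarrow{\sim} B(\infty)$ of Theorem~\ref{thm:RCinf_nsl} yields a surjection $\widetilde{\pi}_\lambda = \pi_\lambda \circ \Psi \colon \RC(\infty) \longrightarrow B(\lambda) \sqcup \{0\}$.

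Given this, proving $\RC(\lambda) \iso B(\lambda)$ reduces to identifying $\RC(\lambda)$, defined as the closure of $(\nu_\emptyset, J_\emptyset)$ under the modified operators, with the preimage $\widetilde{\pi}_\lambda^{-1}\bigl(B(\lambda)\bigr) \subseteq \RC(\infty)$. I would proceed by induction on depth: starting from $(\nu_\emptyset, J_\emptyset)$, at each step the modified $f_a$ either returns $0$ or produces a valid rigged configuration, and I want to show this matches, via $\widetilde{\pi}_\lambda$, the behavior of $f_a$ on $B(\lambda)$. The modified weight $\wt'(\nu,J) = \wt(\nu,J) + \lambda$ is set up so that it agrees with the weight on $B(\lambda)$ under $\widetilde{\pi}_\lambda$, and compatibility of $\varepsilon_a$ and $\varphi_a$ then follows from the crystal axioms.

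The main obstacle is verifying that the validity condition exactly captures the right notion of ``$\varphi_a$ vanishing.'' Concretely, I need to show that $f_a(\nu, J)$ (as computed by the unmodified rule of $\RC(\infty)$) fails to be a valid rigged configuration with respect to the shifted vacancy numbers $p_i^{(a)}(\nu) = L^{(a)} - \sum_{(b,j) \in \HH} A_{ab} \min(i,j) m_j^{(b)}$ if and only if $f_a \widetilde{\pi}_\lambda(\nu, J) = 0$ in $B(\lambda)$. This reduces to a case analysis on the two forms $f_a$ can take: adding a string $(1,-1)$, or lengthening the string $(\ell, x)$ with smallest label $x = 0$ to $(\ell+1, x-1)$ while preserving colabels. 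Tracking how the $p_i^{(a)}$ change after the operation, together with the convexity of $i \mapsto p_i^{(a)}(\nu)$ and the observation that $e_a$ automatically preserves validity (since it shrinks the chosen string and increases that label), should provide the required correspondence.

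Once this validity-equals-nonvanishing correspondence is established, the result follows quickly. The set $\RC(\lambda)$ is then seen to be a connected subcrystal of the extended rigged configuration structure, containing the highest weight element $(\nu_\emptyset, J_\emptyset)$ of weight $\lambda$ and closed under the modified $e_a, f_a$, and $\widetilde{\pi}_\lambda$ restricts to an isomorphism onto $B(\lambda)$ by appeal to the uniqueness of the highest weight crystal $B(\lambda)$ among connected crystals generated by a highest weight vector of weight $\lambda$.
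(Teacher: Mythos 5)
First, note that the paper does not actually prove this statement: it is quoted from~\cite{SS15} with no argument given, so there is no in-paper proof to compare against. That said, your overall strategy is the intended one --- the paper itself remarks immediately before the theorem that $\RC(\lambda)$ ``arises from the natural projection of $B(\infty) \longrightarrow B(\lambda)$,'' and reducing to Theorem~\ref{thm:RCinf_nsl} plus Kashiwara's realization of $B(\lambda)$ inside $B(\infty)$ is how the result is established in the literature. Your reduction is clean: everything hinges on the single combinatorial lemma that, for a valid $(\nu,J)$, the unmodified $f_a(\nu,J)$ fails validity with respect to the shifted vacancy numbers exactly when $\varphi_a$ vanishes on the image in $B(\lambda)$ (equivalently, that validity of $(\nu,J)$ is Kashiwara's criterion $\varepsilon_a^*\le \inner{h_a}{\lambda}$ in disguise). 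You have correctly isolated this lemma, but it is precisely where all the mathematical content lives, and you leave it as a sketch. Within that sketch, two points are overstated. The claim that $e_a$ ``automatically'' preserves validity is not automatic: $e_a$ moves the selected string from length $\ell$ to $\ell-1$ and raises its label, and $p_{\ell-1}^{(a)}$ need not dominate $p_{\ell}^{(a)}$; one genuinely needs the convexity of $i \mapsto p_i^{(a)}$ away from parts of $\nu^{(a)}$ together with the selection rule (minimal label, then minimal length) to conclude $x+1 \le p_{\ell-1}^{(a)}$. You name convexity as a tool but do not run the argument, and the analogous verification for $f_a$ (that failure can only occur at the string acted upon, and that it occurs iff $\varphi_a=0$) likewise requires the formula $\varphi_a(\nu,J) = p_{\infty}^{(a)} - \min(0, x_{\min})$ or an equivalent computation, which is absent.

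A second, smaller issue: your closing appeal to ``the uniqueness of the highest weight crystal $B(\lambda)$ among connected crystals generated by a highest weight vector of weight $\lambda$'' is not a valid principle for abstract crystals --- there are many non-isomorphic connected abstract crystals with a unique highest weight element of a given weight. Fortunately you do not need it: once the validity-equals-nonvanishing lemma is in place, your depth induction already produces a weight-shifted bijection intertwining all $e_a$ and $f_a$, which is the isomorphism directly. If you want the shortest complete route to the key lemma, it is worth knowing that on rigged configurations the $*$-involution exchanges labels and colabels, so $\varepsilon_a^*(\nu,J)$ is read off from the smallest colabel; Kashiwara's criterion $\varepsilon_a^*(\nu,J) \le \inner{h_a}{\lambda}$ then literally becomes the statement that every rigging is bounded by the $\lambda$-shifted vacancy number, i.e., validity.
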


\begin{figure}[ht]
\input{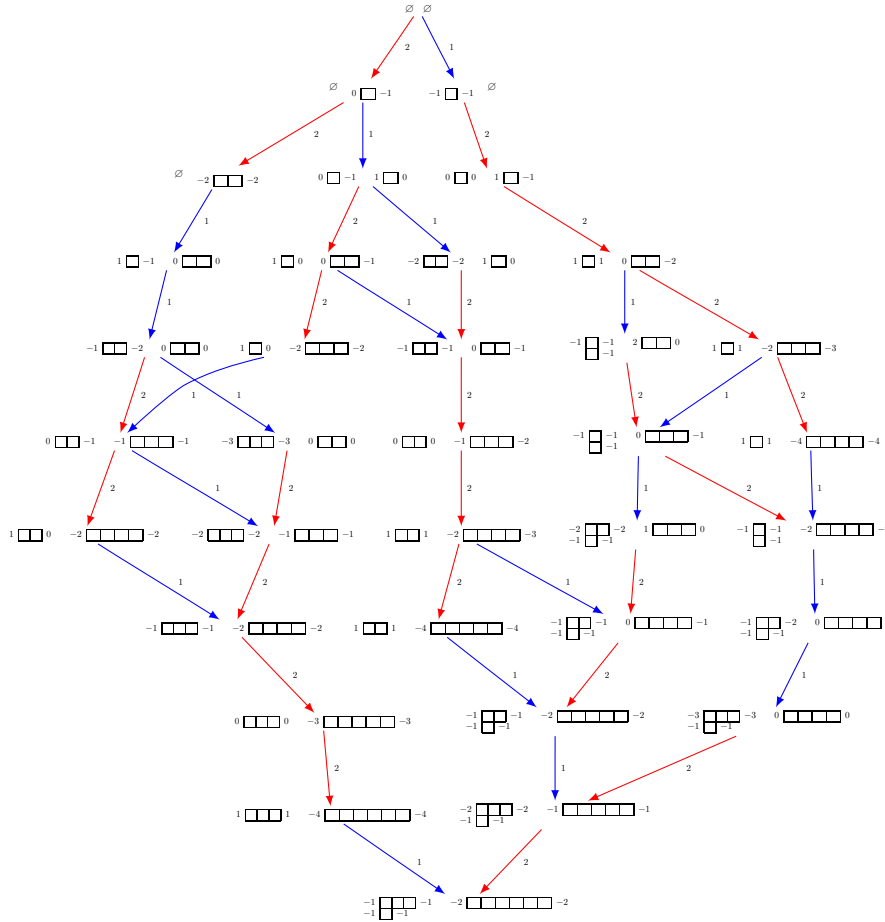}
\caption{The crystal graph $\RC(\Lambda_1+2\Lambda_2)$ of type $B_2$, created using Sage.}
\end{figure}

\subsection{Bijection with tableau model for highest weight crystals}

Kirillov-Reshetikhin (KR) crystals are crystals $B^{r,s}$ associated to certain finite-dimensional $U_q([\widetilde{\g}, \widetilde{\g}])$-modules $W^{r,s}$, where $r$ is a node in the Dynkin diagram and $s$ is a positive integer. 
As $U_q(\g)$-crystals, KR crystals have the direct sum decompositions
\[
B^{r,s} = B(s\Lambda_r) \oplus \bigoplus_{\lambda} B(\lambda),
\]
where the sum is over $\lambda \in P^+$ satisfying certain conditions~\cite{Chari01, FOS09, HKOTT02,HKOTY99, Hernandez06,Hernandez10, Nakajima03}. We note that we only work with the classical crystal structure of $B^{r,s}$, and as such, we simply consider $B^{r,s}$ to be a $U_q(\g)$-crystal.
%Next we only describe Kirillov-Reshetikhin crystals $B^{r,s}$ where $r \in I$ and $s \in \ZZ_{>0}$ of type $\widetilde{\g}$ as a direct sum of classical crystals. For $\g$ of type $A_n$, we have $B^{r,s} = B(s\Lambda_r)$. For $\g$ of type $B_n$, we have $B^{r,s} = \bigoplus_{\lambda} B(\lambda)$ where we take all partition that fit inside an $r \times s$ box. For $\g$ of type $C_n$ or $D_{n+1}$, we have $B^{r,s} = \bigoplus_{\lambda} B(\lambda)$ where $\lambda$ is formed by removing vertical dominoes from an $r \times s$ box. {\color{blue} Note: we don't need the spinor cases here since all of our shapes are partitions, as long as we're careful with our wording.}

In~\cite{OSS03}, a bijection $\Phi$ from classically highest weight elements in a tensor product of KR crystals of the form $(B^{1,1})^{\otimes N}$ for all non-exceptional affine types was described.  A similar bijection in type $E_6^{(1)}$ and $D_4^{(3)}$ was given in~\cite{OS12} and~\cite{Scrimshaw15}, respectively. For simplicity, if $B = \bigotimes_{i=1}^N B^{r_i,1}$ we write $\RC(B) = \RC(\lambda)$, where $\lambda = \sum_{a \in I} c_a \Lambda_a$ with $c_a$ equal to the number of factors $B^{a,1}$ occurring in $B$.

\begin{remark}
\label{remark:KR_crystal difference}
We define
\[
\widetilde{B}^{r,1} = \begin{cases}
B^{n,2} & \g = B_n \text{ and } r = n, \\
B^{n,1} \otimes B^{n+1,1} & \g = D_{n+1} \text{ and } r = n, \\
B^{r,1} & \text{otherwise.}
\end{cases}
\]
Because we will only use $\widetilde{B}^{r,s}$ for the remainder of this paper and to ease the burden of notation, we will simply write $B^{r,s}$. We also note that this allows us to not consider any special modifications to $\Phi$ as in~\cite{SS15}.
\end{remark}

The bijection $\Phi$ is given by applying the basic algorithm given in~\cite{OSS03}
\[
\delta \colon \RC(B^{1,1} \otimes B^*) \longrightarrow \RC(B^*)
\] 
as many times as necessary, where $B^* = \bigotimes_{i=1}^N B^{r_i,1}$. The algorithm $\delta$ is given by traversing the crystal graph $\TT(\Lambda_1)$ (of type $\g$) starting at $\boxed{1} \in \TT(\Lambda_1)$, where for each edge $a$ we remove a box from a singular string from $\nu^{(a)}$ of strictly longer length than the previously selected string after removal. For $\widetilde{\g}$ of type $D_{n+1}^{(1)}$, we choose the smaller singular string between $\nu^{(n)}$ and $\nu^{(n+1)}$ when we are at $\boxed{n-1} \in \TT(\Lambda_1)$.  If we cannot find a singular string or there are no outgoing arrows when we are at $\boxed{r} \in \TT(\Lambda_1)$, then we say $\delta$ returns $r$ and we make all changed strings singular.

We also have the following modification for $\widetilde{\g}$ of type $D_{n+1}^{(2)}$.  Let $\ell^{(a)}$ denote the original length of the selected strings in $\nu^{(a)}$ (for $a < r$, we have $\ell^{(a)} = 1$). We say a string $(i, x)$ is \emph{quasi-singular} if $x = p_i^{(n)} - 1$ and there does not exist a singular string of length $i$. For $\nu^{(n)}$, if the singular string has length $1$, we immediately return $\emptyset \in B^{1,1}$.  Otherwise we look for the smallest string of longer length than the previously which is either
\begin{itemize}
\item[(S)] singular,
\item[(Q)] quasi-singular.
\end{itemize}
If no such string exists, we return $n-1$ (as usual). If we are in case~(S), we remove 2 boxes from the singular string and proceed from $\boxed{\overline{n-1}} \in \TT(\Lambda_1)$.  If we are in case~(Q), we remove a box from the quasi-singular string and look for a larger singular string in $\nu^{(n)}$. If no such string exists, we return $0$. Otherwise we say we are in case~(Q,S) and remove a box from the found singular string. We then continue from $\boxed{\overline{n-1}} \in \TT(\Lambda_1)$. If we are at $\boxed{\bi} \in \TT(\Lambda_1)$ and the length of the previously selected string before removal equals $\ell^{(a)}$, we remove a second box from the string originally selected in $\nu^{(a)}$.

After all boxes are removed, we make all of the changed strings singular unless we are in case~(Q,S), in which case the (longer) selected singular string in $\nu^{(n)}$ is made quasi-singular.  

\begin{ex}
Consider $B = (B^{1,1})^{\otimes 5}$ of type $D_5^{(2)}$. Therefore applying $\delta$ each time, we have
\[
\begin{tikzpicture}[scale=.35]
\begin{scope}[yshift=1cm]
 \rpp{2,1,1}{0,0,0}{0,1,1}
\begin{scope}[xshift=5cm]
 \rpp{2,1}{0,0}{0,0}
\end{scope}
\begin{scope}[xshift=10cm]
 \rpp{2}{1}{1}
\end{scope}
\begin{scope}[xshift=15cm]
 \rpp{2}{0}{0}
\end{scope}
\end{scope} % yshift
%%%
\draw[->] (8,-3cm) -- (8,-6cm) node[midway,right] {$\bon$};
\begin{scope}[yshift=-6cm]
 \rpp{1,1}{0,0}{1,1}
\begin{scope}[xshift=5cm]
 \rpp{1}{0}{0}
\end{scope}
\begin{scope}[xshift=10cm]
 \draw (0.5,-1.5) node {$\emptyset$};
\end{scope}
\begin{scope}[xshift=15cm]
 \draw (0.5,-1.5) node {$\emptyset$};
\end{scope}
\end{scope} % yshift
%%%
\draw[->] (8,-9cm) -- (8,-12cm) node[midway,right] {$1$};
\begin{scope}[yshift=-12cm]
 \rpp{1,1}{0,0}{0,0}
\begin{scope}[xshift=5cm]
 \rpp{1}{0}{0}
\end{scope}
\begin{scope}[xshift=10cm]
 \draw (0.5,-1.5) node {$\emptyset$};
\end{scope}
\begin{scope}[xshift=15cm]
 \draw (0.5,-1.5) node {$\emptyset$};
\end{scope}
\end{scope} % yshift
%%%
\draw[->] (8,-15cm) -- (8,-18cm) node[midway,right] {$3$};
\begin{scope}[yshift=-18cm]
 \rpp{1}{0}{0}
\begin{scope}[xshift=5cm]
 \draw (0.5,-1.5) node {$\emptyset$};
\end{scope}
\begin{scope}[xshift=10cm]
 \draw (0.5,-1.5) node {$\emptyset$};
\end{scope}
\begin{scope}[xshift=15cm]
 \draw (0.5,-1.5) node {$\emptyset$};
\end{scope}
\end{scope} % yshift
%%%
\draw[->] (8,-21cm) -- (8,-24cm) node[midway,right]{$2$};
\begin{scope}[yshift=-24cm]
 \draw (0.5,-1.5) node {$\emptyset$};
\begin{scope}[xshift=5cm]
 \draw (0.5,-1.5) node {$\emptyset$};
\end{scope}
\begin{scope}[xshift=10cm]
 \draw (0.5,-1.5) node {$\emptyset$};
\end{scope}
\begin{scope}[xshift=15cm]
 \draw (0.5,-1.5) node {$\emptyset$};
\end{scope}
\end{scope} % yshift
%%%
\draw[->] (8,-27cm) -- (8,-30cm) node[midway,right] {$1$};
\begin{scope}[yshift=-30cm]
 \draw (0.5,-1.5) node {$\emptyset$};
\begin{scope}[xshift=5cm]
 \draw (0.5,-1.5) node {$\emptyset$};
\end{scope}
\begin{scope}[xshift=10cm]
 \draw (0.5,-1.5) node {$\emptyset$};
\end{scope}
\begin{scope}[xshift=15cm]
 \draw (0.5,-1.5) node {$\emptyset$};
\end{scope}
\end{scope} % yshift
%%%
\end{tikzpicture}
\]
(where the result from each application of $\delta$ is to the right of the arrow) and resulting in
\[
\young(\bon) \otimes \young(1) \otimes \young(3) \otimes \young(2) \otimes \young(1)\ .
\]
\end{ex}

We can now extend this bijection to $\bigotimes_{i=1}^N B^{r_i,1}$ as given for types $A_n^{(1)}$~\cite{KSS1999}, $D_{n+1}^{(1)}$~\cite{S05}, $A_{2n-1}^{(2)}$~\cite{SS15}, $D_{n+1}^{(2)}$~\cite{OSS03II}, and $D_4^{(3)}$ \cite{Scrimshaw15} by considering the map 
\[
\mathrm{lt} \colon \RC(B^{r,1} \otimes B^*) \longrightarrow \RC(B^{1,1} \otimes B^{r-1,1} \otimes B^*)
\] 
for $r \geq 2$ which adds a singular string of length 1 to all $\nu^{(a)}$ for $a < r$ and then applying $\delta$. We can combine these two steps $\delta' := \delta \circ \mathrm{lt}$ where we just begin $\delta$ starting at $\boxed{r} \in \TT(\Lambda_1)$ (i.e., the first box we try to remove is in $\nu^{(r)}$). Unless otherwise noted, we will be using $\delta'$ in place of $\delta$.

\begin{ex}
Consider $B^{1,1} \otimes B^{2,1} \otimes B^{1,1}$ of type $A_5^{(2)}$. Therefore applying $\delta$ each time, we have
\[
\begin{tikzpicture}[scale=.35]
 \rpp{1,1}{0,0}{0,0}
\begin{scope}[xshift=6cm]
 \rpp{1,1}{1,0}{1,1}
\end{scope}
\begin{scope}[xshift=12cm]
 \rpp{1}{0}{0}
\end{scope}
%%%
\draw[->] (6,-4cm) -- (6,-7cm) node[midway,right] {$\bth$};
\begin{scope}[yshift=-7cm]
 \rpp{1}{0,0}{0}
\begin{scope}[xshift=6cm]
 \rpp{1}{1,0}{1}
\end{scope}
\begin{scope}[xshift=12cm]
 \draw (0.5,-1.5) node {$\emptyset$};
\end{scope}
\end{scope} % yshift
%%%
\draw[->] (6,-10cm) -- (6,-13cm) node[midway,right] {$3$};
\begin{scope}[yshift=-13cm]
 \rpp{1}{0}{0}
\begin{scope}[xshift=6cm]
 \rpp{1}{0}{0}
\end{scope}
\begin{scope}[xshift=12cm]
 \draw (0.5,-1.5) node {$\emptyset$};
\end{scope}
\end{scope} % yshift
%%%
\draw[->] (6,-16cm) -- (6,-19cm) node[midway,right] {$2$};
\begin{scope}[yshift=-19cm]
 \rpp{1}{0}{0}
\begin{scope}[xshift=6cm]
 \draw (0.5,-1.5) node {$\emptyset$};
\end{scope}
\begin{scope}[xshift=12cm]
 \draw (0.5,-1.5) node {$\emptyset$};
\end{scope}
\end{scope} % yshift
%%%
\draw[->] (6,-22cm) -- (6,-25cm) node[midway,right] {$1$};
\begin{scope}[yshift=-25cm]
 \draw (0.5,-1.5) node {$\emptyset$};
\begin{scope}[xshift=6cm]
 \draw (0.5,-1.5) node {$\emptyset$};
\end{scope}
\begin{scope}[xshift=12cm]
 \draw (0.5,-1.5) node {$\emptyset$};
\end{scope}
\end{scope} % yshift
%%%
\end{tikzpicture}
\]
and resulting in
\[
\young(\bth) \otimes \young(3,2) \otimes \young(1)\ .
\]
\end{ex}

For $\delta^{-1}(b)$, in general we select the largest singular strings starting at $\boxed{b} \in \TT(\Lambda_1)$ and following arrows in reverse until we reach $\boxed{r} \in \TT(\Lambda_1)$.

\begin{remark}
We note that for $B(\Lambda_r) \subseteq B^{r,1}$, the classical tableaux representation is the same as the Kirillov--Reshetikhin tableaux representation of~\cite{S05,OSS13,SS15,Scrimshaw15}.
\end{remark}

We also have the following from~\cite[Prop.~6.4]{SS15} by using the results of~\cite{DS06,Sakamoto13,OSS03II,SS15}.

\begin{thm}
\label{thm:iso_columns}
Let $B = \bigotimes_{i=1}^N B^{r_i,1}$ be of type $\widetilde{\g}$. Then $\Phi$ is a $U_q(\g)$-crystal isomorphism, where $\widetilde{\g}$ and $\g$ are related via Table \ref{table:affine}.
\end{thm}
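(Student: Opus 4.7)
The plan is to reduce all affine types appearing in Table \ref{table:affine} to the two simply-laced cases $A_n^{(1)}$ and $D_{n+1}^{(1)}$, which have already been established as classical crystal isomorphisms in \cite{DS06} and \cite{Sakamoto13}, respectively. The remaining affine types $A_{2n-1}^{(2)}$, $D_{n+1}^{(2)}$, and $D_4^{(3)}$ are all obtained as folded versions of a simply-laced affine algebra under a suitable diagram automorphism $\sigma$, so the strategy is to exploit the virtual crystal framework of \cite{OSS03II} together with the compatibility result \cite[Prop.~6.4]{SS15}.

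First I would recall that on the tableau/KR side, the tensor product $\bigotimes_{i=1}^N B^{r_i,1}$ of type $\widetilde{\g}$ embeds as the $\sigma$-fixed subcrystal of an appropriately scaled tensor product $\bigotimes_{i=1}^N \virtual{B}^{r_i,1}$ in the ambient simply-laced affine type, and on the rigged configuration side, $\RC(B)$ embeds into the ambient $\virtual{\RC}(\virtual{B})$ via the virtualization map that rescales partition lengths and multiplicity arrays by the folding factors $\gamma_a$. The modified choice $\widetilde{B}^{r,1}$ from Remark \ref{remark:KR_crystal difference} is precisely designed to avoid spin columns, which guarantees that these virtualization embeddings land in the correct part of the ambient crystals and behave well with both $\delta$ and $\delta^{-1}$.

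Next, I would apply \cite[Prop.~6.4]{SS15}, which asserts that $\Phi$ commutes with the virtualization maps on both sides — that is, the diagram relating $\Phi$ in type $\widetilde{\g}$ and $\virtual{\Phi}$ in the ambient simply-laced type commutes. Because \cite{DS06} and \cite{Sakamoto13} show that $\virtual{\Phi}$ is a classical crystal isomorphism in types $A_n^{(1)}$ and $D_{n+1}^{(1)}$, and because the fixed-point subcrystal inherits its Kashiwara operators as $\prod_{b \in \sigma\text{-orbit of }a} \virtual{f}_b$ (and similarly for $\virtual{e}_b$), the crystal isomorphism descends to the fixed subcrystal. This simultaneously forces $\Phi$ to preserve weights (which is already transparent from \eqref{LL-config} and \eqref{Binf_wt}), to intertwine $e_a$ and $f_a$ for all $a \in I$, and to be a bijection.

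The main obstacle in carrying out this plan rigorously is verifying that the algorithm $\delta'$ (in its modified forms for $D_{n+1}^{(2)}$ involving quasi-singular strings, and for $D_4^{(3)}$) genuinely restricts from the corresponding ambient simply-laced $\delta'$ to the $\sigma$-fixed subcrystal in all subcases — in particular when the selected column goes through the folded nodes. However, this compatibility is exactly what is proven (case-by-case) in \cite{OSS03II,SS15,Scrimshaw15}, so the verification reduces to citing those results, and the theorem follows by assembling the simply-laced base cases through the virtualization.
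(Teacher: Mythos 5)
Your proposal is correct and follows essentially the same route as the paper, which offers no written-out proof but simply derives the theorem from \cite[Prop.~6.4]{SS15} together with the simply-laced isomorphism results of \cite{DS06,Sakamoto13} and the virtualization machinery of \cite{OSS03II,SS15,Scrimshaw15}. Your outline is a faithful reconstruction of that intended argument, including the correct identification of the delicate point (compatibility of the modified $\delta$ with the folding), which is likewise delegated to the cited references.
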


Given dominant integral weight $\lambda = \sum_{a\in I} c_a\Lambda_a \in \widehat{P}^+$, define
\[
B^{\otimes \lambda} = \bigotimes_{a \in I} (B^{a,1})^{\otimes c_a},
\]
where the factors are ordered to be weakly decreasing with respect to $a$.

\begin{ex}
If $\lambda = 2\Lambda_4 + 3\Lambda_1$ in type $A_{17}$, then $B^{\otimes \lambda} = (B^{4,1})^{\otimes 2} \otimes (B^{1,1})^{\otimes 3}$.
\end{ex}

Next we can restrict $\Phi$ to a (classical, or $U_q(\g)$-)crystal isomorphism between $\RC(\lambda)$ and $\TT(\lambda)$ as follows. We recall that there exists, by weight considerations, a unique copy
\[
B\left( \sum_{i=1}^N \mu_i \right) \subseteq \bigotimes_{i=1}^N B(\mu_i)
\]
generated by $u_{\mu_1} \otimes \cdots \otimes u_{\mu_N}$, where $u_{\mu_i} \in B(\mu_i)$ is the unique highest weight element. Then there exists a unique embedding
\begin{equation}
\label{eq:subcrystal}
\TT(\lambda) \subseteq \bigotimes_{a \in I} \TT(\Lambda_a)^{\otimes c_a} \subseteq B^{\otimes \lambda}.
\end{equation}
Because we have chosen the ordering of $B^{\otimes \lambda}$ to be in decreasing order, the natural embedding of $\TT(\lambda) \longhookarrow \TT(\Lambda_1)^{\otimes \absval{\lambda}}$ agrees with the natural classical embedding of $B^{\otimes \lambda} \longhookarrow \TT(\Lambda_1)^{\otimes \absval{\lambda}}$.
Moreover the highest weight element $T_{\lambda} \in \TT(\lambda)$ is given by (a tensor product of) basic columns. So in $\RC(B^{\otimes\lambda})$, considered as a $U_q(\g)$-crystal, the unique connected component generated by $(\nu_{\emptyset}, J_{\emptyset})$ is $\RC(\lambda)$. Therefore $\Phi(\nu_{\emptyset}, J_{\emptyset}) = T_{\lambda}$, and hence we have the following.

\begin{prop}
\label{prop:classical_bijection}
The crystal isomorphism $\Phi \colon \RC(B^{\otimes \lambda}) \longrightarrow B^{\otimes \lambda}$ restricts to a crystal isomorphism between $\RC(\lambda)$ and $\TT(\lambda)$.
\end{prop}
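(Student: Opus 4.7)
The plan is to leverage Theorem~\ref{thm:iso_columns}, which provides a $U_q(\g)$-crystal isomorphism $\Phi \colon \RC(B^{\otimes \lambda}) \longrightarrow B^{\otimes \lambda}$, together with the elementary observation that any crystal isomorphism carries connected components bijectively onto connected components. It then suffices to identify $\RC(\lambda)$ and $\TT(\lambda)$ as matching components on the two sides and to confirm that $\Phi$ pairs the distinguished highest-weight vectors $(\nu_\emptyset, J_\emptyset)$ and $T_\lambda$.

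On the rigged-configuration side, the discussion just preceding the proposition identifies $\RC(\lambda)$ as the unique connected component of $\RC(B^{\otimes \lambda})$ generated by $(\nu_\emptyset, J_\emptyset)$, since the modified operators defining $\RC(\lambda)$ agree with those of $\RC(B^{\otimes \lambda})$ on valid elements reachable from the empty configuration. Dually, the embedding \eqref{eq:subcrystal} realizes $\TT(\lambda)$ as the connected component of $B^{\otimes \lambda}$ generated by $T_\lambda$, because $T_\lambda$ is precisely the tensor product of basic columns dictated by $\lambda$ in the decreasing order chosen for $B^{\otimes \lambda}$.

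The remaining step is to verify that $\Phi(\nu_\emptyset, J_\emptyset) = T_\lambda$. I would trace the algorithm $\delta' = \delta \circ \mathrm{lt}$ inductively on the factors of $B^{\otimes \lambda}$, starting from the empty rigged configuration. Each time a leftmost factor $B^{r,1}$ is processed, $\mathrm{lt}$ introduces only length-$1$ singular strings in $\nu^{(1)}, \ldots, \nu^{(r-1)}$, so $\delta$ traverses the fundamental crystal outward from $\boxed{r}$ and, finding no singular string long enough to shorten, outputs successively $r, r-1, \ldots, 1$, producing the basic column of height $r$. Concatenating over all factors yields exactly $T_\lambda$. The proposition then follows immediately from the connected-component principle: $\Phi$ restricts to a crystal isomorphism from the component of $(\nu_\emptyset, J_\emptyset)$ in $\RC(B^{\otimes \lambda})$, namely $\RC(\lambda)$, onto the component of $T_\lambda$ in $B^{\otimes \lambda}$, namely $\TT(\lambda)$.

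The main obstacle is the verification $\Phi(\nu_\emptyset, J_\emptyset) = T_\lambda$. Although the trace is conceptually transparent, it requires care in type $D_{n+1}^{(2)}$, where quasi-singular strings can arise in the algorithm, and under the conventions of Remark~\ref{remark:KR_crystal difference} where $B^{n,2}$ or $B^{n,1} \otimes B^{n+1,1}$ replaces $B^{n,1}$; in each such case one must separately confirm that the basic-column output persists when starting from the empty data.
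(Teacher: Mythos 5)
Your proposal is correct and shares its skeleton with the paper's argument: both identify $\TT(\lambda)$ as the connected component of $T_\lambda$ inside $B^{\otimes\lambda}$ via the embedding~\eqref{eq:subcrystal}, identify $\RC(\lambda)$ as the connected component of $(\nu_\emptyset,J_\emptyset)$ inside $\RC(B^{\otimes\lambda})$, and then invoke the fact that the isomorphism of Theorem~\ref{thm:iso_columns} matches components. The one place you genuinely diverge is the verification that $\Phi(\nu_\emptyset,J_\emptyset)=T_\lambda$, which you propose to establish by tracing $\delta'=\delta\circ\mathrm{lt}$ through each factor and which you correctly flag as the delicate step (quasi-singular strings in type $D_{n+1}^{(2)}$, the substitutions of Remark~\ref{remark:KR_crystal difference}, and --- a point your sketch glosses over --- the bookkeeping needed to see that the length-$1$ strings introduced by $\mathrm{lt}$ are non-singular at the later stages, so that each application of $\delta'$ returns the expected single letter rather than the whole sequence $r,r-1,\dots,1$ at once). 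The paper avoids all of this with a soft weight argument: $\Phi$ preserves weights, $(\nu_\emptyset,J_\emptyset)$ is the unique element of $\RC(B^{\otimes\lambda})$ of weight $\lambda$ (any nonempty $\nu$ has strictly smaller weight), and $T_\lambda$ is the unique element of $B^{\otimes\lambda}$ of weight $\lambda$ (each $B^{a,1}$ has one-dimensional $\Lambda_a$-weight space and all other weights strictly smaller), so the two must correspond. Your algorithmic route would yield more information --- an explicit confirmation of how $\Phi$ produces basic columns, which is in the spirit of Lemma~\ref{lemma:embed_RC} --- but at the cost of a type-by-type case analysis that the uniqueness-of-maximal-weight observation renders unnecessary.
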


\section{The crystal isomorphism between $\RC(\infty)$ and $\TT(\infty)$}
\label{sec:isomorphism}

Let $\g$ be of type $A_n$, $B_n$, $C_n$, $D_{n+1}$, or $G_2$. By knowing to which $B(\lambda)$ a particular rigged configuration belongs (in fact, there are infinite such $\lambda$), we can extend the bijection between rigged configurations and tensor products of KR crystals to $B(\infty)$ by projecting down to $B(\lambda)$. We show that this implies the (induced) map given by lifting the isomorphism $\Phi \colon \RC(\lambda) \longrightarrow \TT(\lambda)$ from Proposition~\ref{prop:classical_bijection} is an isomorphism between $\RC(\infty)$ and $\TT(\infty)$ (and thus the unique isomorphism between $\RC(\infty)$ and $\TT(\infty)$ since the automorphism group of $B(\infty)$ is the trivial group). 

Consider
\[
%\RC^V = \left\{ (\nu,J) \in \bigsqcup_{\lambda \in P^+} \RC(\lambda) : (\nu,J) \text{ is valid} \right\}
\RC^V = \bigsqcup_{\lambda \in P^+} \RC(\lambda),
%\RC^V = \left\{ (\nu,J) \in \bigsqcup_{\lambda \in P^+} \bigl( T_\lambda \otimes \RC(\infty) \bigr) : (\nu,J) \text{ is valid} \right\}
\]
and denote $(\nu, J, \lambda) \in \RC^V$ as the element $(\nu, J) \in \RC(\lambda)$. Define an equivalence relation on $\RC^V$ by asserting
\begin{equation}
\label{eq:equivalence_class}
(\nu, J, \lambda) \sim (\nu', J', \lambda') \iff \nu = \nu' \text{ and } J = J'.
\end{equation}
Note the vacancy numbers will vary over the equivalence class.  Equivalently, we have defined a subset $W_{(\nu,J)} \subseteq P^+$ such that $(\nu, J) \in \RC(\lambda)$ for all $\lambda \in W_{(\nu,J)}$. We show that every equivalence class of large tableaux embeds into an equivalence class of $\RC^V$ and that $\Phi$ induces a bijection from $\RC^V / \sim$ to $\TT(\infty) = \TT^L / \sim$. Subsequently, we show that this induced bijection is the desired crystal isomorphism.

For a sequence of partitions $\nu = (\nu^{(a)})_{a \in I}$, define $\lambda_{\nu} \in P^+$ by
\[
\lambda_{\nu} := \sum_{\substack{a \in I \\ a < n}} \big( \lvert \nu^{(a)} \rvert + 1 \big) \Lambda_a + \lambda_{\nu}^{(n)},
\]
where
\[
\lambda_{\nu}^{(n)} := \begin{cases}
2 \big( \lvert \nu^{(n)} \rvert + 1 \big) \Lambda_n & \g = B_n,
\\ (\max\bigl( \lvert \nu^{(n)} \rvert, \lvert \nu^{(n+1)} \rvert \bigr) + 1 \big) (\Lambda_n + \Lambda_{n+1}) & \g = D_{n+1},
\\ \big( \lvert \nu^{(n)} \rvert + 1 \big) \Lambda_n & \text{otherwise,}
\end{cases}
\]
and
\[
\RC^{EV} = \{ (\nu, J, \lambda) \in \RC^V \mid \lambda \geq \lambda_{\nu} \}.
\]
Here, $\lambda \leq \mu$ means $\lambda = (\lambda_i : i \in I)$ and $\mu = (\mu_i \mid i \in I)$ with $\lambda_i \le \mu_i$ for all $i \in I$.
We can restrict the equivalence relation given in equation \eqref{eq:equivalence_class} to $\RC^{EV}$ (so that classes in $\RC^{EV}$ are subclasses of those in $\RC^V$).  Call a rigged configuration \emph{extra valid} if it belongs to $\RC^{EV}$, and call a rigged configuration \emph{marginally extra valid} if $(\nu, J) \in \RC(\lambda_{\nu})$. We note that for each equivalence class in $\RC^{EV}$, there is a unique marginally extra valid rigged configuration because it has the smallest possible vacancy numbers.

\begin{lemma}
If $(\nu, J) \in \RC(\infty)$, then $(\nu, J) \in \RC(\lambda_{\nu})$.
\end{lemma}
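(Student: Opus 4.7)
The plan is to prove this by induction on the depth of $(\nu,J)$ in $\RC(\infty)$, i.e., on the length of the shortest word in the $f_a$'s taking $(\nu_\emptyset,J_\emptyset)$ to $(\nu,J)$. The base case is immediate because $(\nu_\emptyset,J_\emptyset)$ has empty riggings and $\lambda_{\nu_\emptyset}$ gives strictly positive vacancy numbers. For the inductive step, write $(\nu,J) = f_a(\tilde\nu,\tilde J)$ so that, by the inductive hypothesis, $(\tilde\nu,\tilde J) \in \RC(\lambda_{\tilde\nu})$. A direct case check on the definition of $\lambda_\nu$ shows $\lambda_\nu \geq \lambda_{\tilde\nu}$ (in all of the types and for both spin and non-spin nodes), and since vacancy numbers are monotone in $L^{(b)}$ we obtain $(\tilde\nu,\tilde J) \in \RC(\lambda_\nu)$ as well.

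Validity of $(\nu,J)$ in $\RC(\lambda_\nu)$ is then checked string by string. For any string of $(\nu,J)^{(b)}$ inherited from $(\tilde\nu,\tilde J)^{(b)}$, the rigging is shifted under $f_a$ by exactly the change in $P_i^{(b)}$ (so that $\RC(\infty)$-colabels are preserved), and $L^{(b)}(\lambda_\nu)$ is unaltered during that single step; hence the slack $p_i^{(b)}(\nu,\lambda_\nu) - x$ equals the previous slack, which is nonnegative by the inductive hypothesis. It remains to control the new or extended string produced by $f_a$.

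In the \emph{add} case, $f_a$ inserts a new $(1,-1)$ into $\tilde\nu^{(a)}$ and the inequality $p_1^{(a)}(\nu,\lambda_\nu) \geq -1$ reduces, after using $L^{(a)}(\lambda_\nu) = L^{(a)}(\lambda_{\tilde\nu}) + 1$ and $P_1^{(a)}(\nu) = P_1^{(a)}(\tilde\nu) - 2$, to the claim $p_1^{(a)}(\tilde\nu,\lambda_{\tilde\nu}) \geq 0$. I would split this into three subcases. If $\tilde\nu^{(a)} = \emptyset$, then $p_1^{(a)}(\tilde\nu,\lambda_{\tilde\nu}) = 1 + \sum_{b\neq a}|A_{ab}|\,\ell(\tilde\nu^{(b)}) \geq 1$. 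If $m_1^{(a)}(\tilde\nu) > 0$, then the existing length-one string has rigging $\geq 1$ by the add hypothesis, and the inductive hypothesis forces $p_1^{(a)}(\tilde\nu,\lambda_{\tilde\nu}) \geq 1$. Finally, if $\tilde\nu^{(a)}$ is nonempty but every part has length $\geq 2$, then $|\tilde\nu^{(a)}| \geq 2\ell(\tilde\nu^{(a)})$ and the bound $p_1^{(a)}(\tilde\nu,\lambda_{\tilde\nu}) \geq |\tilde\nu^{(a)}| + 1 - 2\ell(\tilde\nu^{(a)})$ gives $p_1^{(a)}(\tilde\nu,\lambda_{\tilde\nu}) \geq 1$. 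The \emph{extend} case, where $(\ell,x)$ becomes $(\ell+1,x-1)$ with $x$ the smallest label in $\tilde\nu^{(a)}$ and $\ell$ maximal among labels equal to $x$, unfolds analogously to the requirement $p_{\ell+1}^{(a)}(\tilde\nu,\lambda_{\tilde\nu}) \geq x$. This is clear when $m_{\ell+1}^{(a)}(\tilde\nu) > 0$ (apply the inductive hypothesis to an existing length-$(\ell+1)$ string, whose rigging is $\geq x$), and also when $\ell_{\geq \ell+1}(\tilde\nu^{(a)}) = 0$ (the slope formula $p_{\ell+1}^{(a)} - p_\ell^{(a)} = \sum_{b\neq a}|A_{ab}|\,\ell_{\geq \ell+1}(\tilde\nu^{(b)}) \geq 0$ combined with $p_\ell^{(a)}(\tilde\nu,\lambda_{\tilde\nu}) \geq x$ does the job).

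I expect the main obstacle to be the remaining sliver of the extend case: $m_{\ell+1}^{(a)}(\tilde\nu) = 0$ while $\tilde\nu^{(a)}$ has strictly longer parts. Here I would exploit the standard convexity of $i \mapsto p_i^{(a)}$ at positions where $m_i^{(a)} = 0$, interpolating between $p_\ell^{(a)}(\tilde\nu,\lambda_{\tilde\nu}) \geq x$ and $p_{\ell'}^{(a)}(\tilde\nu,\lambda_{\tilde\nu}) \geq x+1$ (where $\ell' > \ell+1$ is the next length with $m_{\ell'}^{(a)} > 0$, whose rigging exceeds $x$ by the minimality of $x$ at maximal length $\ell$). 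The spin nodes in types $B_n$, $D_{n+1}$, and the special node in $G_2$ require the same analysis but with the appropriate values of $L^{(n)}(\lambda_\nu)$ from the piecewise definition of $\lambda_\nu^{(n)}$; in particular, for type $D_{n+1}$ one must split according to which of $|\tilde\nu^{(n)}|, |\tilde\nu^{(n+1)}|$ is the larger, which determines whether $\lambda_\nu - \lambda_{\tilde\nu}$ equals $\Lambda_n + \Lambda_{n+1}$ or vanishes, and the estimates above go through after this bookkeeping.
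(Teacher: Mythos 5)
Your proof takes essentially the same route as the paper's: induct on applications of $f_a$ and observe that the growth of $\lambda_\nu$ (by $\Lambda_a$, or its spin-node analogue) compensates for the drop in the relevant vacancy number, so only the string altered by $f_a$ needs checking. The paper's own argument is just the two-line computation $p_{i+1}^{(a)}(\nu') - p_{i+1}^{(a)}(\nu) = -1 = x' - x$; your add/extend case split and the convexity interpolation establishing $p_{\ell+1}^{(a)}(\tilde\nu,\lambda_{\tilde\nu}) \geq x$ are correct and supply details the paper leaves implicit.
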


\begin{proof}
This clearly holds for $(\nu_{\emptyset}, J_{\emptyset}) \in \RC(0)$. We will now proceed by induction by applying $f_a$ for some $a \in I$. Suppose $(\nu, J) \in \RC(\lambda_{\nu})$, we will show that $(\nu',J') = f_a(\nu, J)$ is in $\RC(\lambda_{\nu'})$. We note that the only possibile failure will occur if $x' > p_{i+1}^{(a)}(\nu')$ for the string $(i, x)$ acted on by $f_a$ since all other colabels remain fixed. We have
\[
p_{i+1}^{(a)}(\nu') - p_{i+1}^{(a)}(\nu) = -1
\]
since $\lambda_{\nu'} - \lambda_{\nu} = \Lambda_a$.  But because $x' - x = -1$, we must have $x' \leq p_{i+1}^{(a)}(\nu')$. Therefore $(\nu', J') \in \RC(\lambda_{\nu'})$.  Since there is some path to $(\nu_{\emptyset}, J_{\emptyset})$, the proof follows by induction.
\end{proof}

\begin{lemma}
\label{lemma:embed_MLT}
Let $T \in \TT^L$.  Then $\Phi^{-1}(T) \in \RC^V$.  Moreover, if $t \in [T]$, then $\Phi^{-1}(t) \in [\Phi^{-1}(T)]$.
\end{lemma}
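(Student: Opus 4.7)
The first assertion is immediate from Proposition~\ref{prop:classical_bijection}: if $T \in \TT^L$ has shape $\lambda \in \widehat{P}^+$, then $T \in \TT(\lambda)$ and so $\Phi^{-1}(T) \in \RC(\lambda) \subseteq \RC^V$. For the second assertion I will induct on the number of basic columns in which $t$ and $T$ differ, so it is enough to handle the case where $t$ is obtained from $T$ by adjoining a single basic column $c$ of some height $r$. Let $\lambda$ and $\mu$ be the shapes of $T$ and $t$; thus $\mu - \lambda = \Lambda_r$, with the obvious $2\Lambda_n$ or $\Lambda_n + \Lambda_{n+1}$ adjustment in types $B_n$ and $D_{n+1}$. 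Writing $(\nu, J) = \Phi^{-1}(T) \in \RC(\lambda)$, the plan is to show that $(\nu, J)$ is also a valid rigged configuration for $\mu$ and that $\Phi((\nu, J, \mu)) = t$; this will force $\Phi^{-1}(t) = (\nu, J)$ in the sense of \eqref{eq:equivalence_class}, proving $\Phi^{-1}(t) \in [\Phi^{-1}(T)]$.

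Validity in $\RC(\mu)$ will follow directly from the vacancy formula \eqref{eq:vacancy}: increasing $\lambda$ to $\mu$ only increases the relevant $L^{(a)}$, hence weakly increases every vacancy $p_i^{(a)}$, so every rigging continues to satisfy $x \leq p_i^{(a)}$. To compute $\Phi((\nu, J, \mu))$, I will choose a sequence $(a_1, \ldots, a_k)$ realizing $(\nu, J) = f_{a_k} \cdots f_{a_1}(\nu_\emptyset, J_\emptyset)$ entirely inside $\RC(\lambda)$. Since each intermediate rigged configuration is also valid in $\RC(\mu)$, the same sequence produces $(\nu, J)$ in $\RC(\mu)$, and Proposition~\ref{prop:classical_bijection} applied in both settings yields
\[
\Phi((\nu, J, \mu)) = f_{a_k} \cdots f_{a_1}(T_\mu), \qquad T = f_{a_k} \cdots f_{a_1}(T_\lambda).
\]

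It remains to verify that $f_{a_k} \cdots f_{a_1}(T_\mu)$ equals $T$ together with the adjoined basic column $c$. Under the embedding $\TT(\mu) \hookrightarrow B^{\otimes \mu}$, the highest weight $T_\mu$ is $T_\lambda$ with an additional highest weight factor $u_{\Lambda_r} \in B^{r,1}$ inserted at the canonical position dictated by the decreasing-height ordering. Because $\varphi_a(u_{\Lambda_r}) = \inner{h_a}{\Lambda_r}$ vanishes for $a \neq r$, the tensor product rule forces each such $f_a$ to bypass $u_{\Lambda_r}$, while $f_r$ can act on $u_{\Lambda_r}$ only at steps whose rigged configuration image leaves $\RC(\lambda)$; by construction our path never does so, so the $u_{\Lambda_r}$ factor survives every Kashiwara lowering and the resulting tableau is $t$. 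The main obstacle will be making this last claim precise — showing that the signature calculation for the $u_{\Lambda_r}$ factor exactly mirrors the inclusion $\RC(\lambda) \subseteq \RC(\mu)$ — which requires tracking the tensor product signature through the bijection and invoking the naturality of $\Phi$ with respect to this inclusion. Once that is established, the induction step and hence the lemma follow.
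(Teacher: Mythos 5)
Your first assertion and the validity step are fine, and the reduction is legitimate: since the crystal operators on $\RC(\lambda)$ and $\RC(\mu)$ agree wherever both are nonzero and replacing $\lambda$ by $\mu$ only raises vacancy numbers by \eqref{eq:vacancy}, the same lowering sequence computes $(\nu,J)$ in both crystals, and Proposition~\ref{prop:classical_bijection} gives $\Phi((\nu,J,\mu)) = f_{a_k}\cdots f_{a_1}T_\mu$. The gap is exactly where you flag it, and it is genuine: the claim that the inserted factor $u_{\Lambda_r}$ is never moved does not follow from $\varphi_a(u_{\Lambda_r})=0$ for $a\neq r$. That observation only shows the factor is transparent to $f_a$ with $a\neq r$ \emph{while it still equals} $u_{\Lambda_r}$; the entire question is whether some $f_r$ in the sequence lands on it, and your justification (``$f_r$ can act on $u_{\Lambda_r}$ only at steps whose rigged configuration image leaves $\RC(\lambda)$'') is itself the statement requiring proof --- it asserts a compatibility between the tensor-product signature at the inserted factor and validity of riggings in $\RC(\lambda)$ that is nowhere established. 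Without it you cannot conclude that $f_{a_k}\cdots f_{a_1}T_\mu$ equals $t$ rather than some other element of $\TT(\mu)$ of the same weight, and the induction does not close.

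For comparison, the paper avoids the crystal structure entirely and argues directly with $\delta^{-1}$: inverting $\delta$ on a single column of height $r$ with bottom entry $x$ shows that when $x=r$ (a basic column) no boxes are added to any $\nu^{(a)}$ and no riggings change --- the only net effect is to raise the vacancy numbers of $\nu^{(r)}$ by $1$. Since $\Phi^{-1}$ processes columns from right to left in weakly increasing height, the basic columns by which $t$ and $T$ differ contribute nothing to the data $(\nu,J)$, so $\Phi^{-1}(t)$ and $\Phi^{-1}(T)$ are literally the same pair viewed in different $\RC(\lambda)$'s, which is the definition of $\sim$ in \eqref{eq:equivalence_class}. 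If you want to keep your crystal-theoretic route, the cleanest repair is to drop the signature argument and instead invoke the facts from~\cite{HL08} quoted in Section~\ref{sec:tableaux} (crystal operators send equivalent large tableaux to equivalent ones, up to adjoining a single basic column) together with the weight count $\wt(f_{a_k}\cdots f_{a_1}T_\mu)-\wt(T)=\Lambda_r$; as written, however, the argument is incomplete.
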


\begin{proof}
Fix a large tableau $T$. By the definition of $\Phi$ and $\RC^V$, we have $\Phi^{-1}(T) \in \RC^V$. Next we note a column in $T$ of height $r$ has the form
\begin{equation}\label{column}
\begin{tikzpicture}[baseline]
\matrix [matrix of math nodes,column sep=-.4, row sep=-.5,text height=10,text width=23,align=center,inner sep=3] 
 {
 	\node[draw,fill=gray!30]{1}; \\
  	\node[draw,fill=gray!30]{2}; \\
  	\node[draw,fill=gray!30]{\vdots}; \\ 
  	\node[draw,fill=gray!30]{r-1}; \\
	\node[draw]{x}; \\
 };
\end{tikzpicture}
\end{equation}
where $1 \le r \le n$. We are going to add a column of the form above in $B^{r,1}$. Suppose $B = \bigotimes_{i=1}^N B^{r_i, s_i}$ and $(\nu,J) \in \RC(B^{a-1,1} \otimes B)$ for $a < r$.  Applying $\delta^{-1}$ to the column in \eqref{column} will change $(\nu,J)$, and the order by which $(\nu,J)$ is affected is determined by reading the column from top to bottom. Indeed, applying $\delta^{-1}$ corresponding to the $a$-box of the column will add $1$ to the vacancy numbers of $\nu^{(a)}$ and subtract $1$ from the vacancy numbers of $\nu^{(a-1)}$ if $a > 1$.  Now suppose we are performing $\delta^{-1}$ corresponding to the $x$-box at the bottom of the column.  Then this application of $\delta^{-1}$ can only add boxes to $\nu^{(a)}$ for $a \geq r$, and can at most decrease the vacancy numbers in $\nu^{(r-1)}$ by $1$. In particular, if $x = r$, then the net result of adding this column is that the vacancy numbers of $\nu^{(r)}$ increased by $1$ and the vacancy numbers of $\nu^{(a)}$ for $a<r$ are left unchanged.

In applying $\Phi^{-1}$ to $T$, we are moving from right to left in $T$, so we are applying $\delta^{-1}$ to columns weakly increasing in height. Moreover, $t$ differs from $T$ by the, without loss of generality, addition of columns with $x = r$. Therefore from the above, we must have $\Phi^{-1}(t) \sim \Phi^{-1}(T)$ for any $t \in [T]$.
\end{proof}

\begin{lemma}
\label{lemma:embed_RC}
Let $(\nu, J) \in \RC^{EV}$. %The bijection $\Phi$ embeds an equivalence class $[(\nu_{EV}, J_{EV})] \in \RC^{EV}(\nu, J)$ into an equivalence class $[T] \in \TT^L$.
Then $\Phi(\nu,J) \in \TT^L$. Moreover if $(\nu',J') \in [(\nu,J)]$, then $\Phi(\nu',J') \in [\Phi(\nu,J)]$.
\end{lemma}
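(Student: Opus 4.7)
The strategy is to first prove the moreover statement, then use it to reduce the first claim to the marginally extra valid case $\lambda = \lambda_\nu$, which we handle by induction on $|\nu|$.

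For the moreover statement, by transitivity of $\sim$ it suffices to take $\lambda' = \lambda + \Lambda_r$ for some $r \in I$, so that $B^{\otimes\lambda'}$ has one extra factor of $B^{r,1}$ compared with $B^{\otimes\lambda}$. The analysis in the proof of Lemma~\ref{lemma:embed_MLT} shows that $\delta'^{-1}$ applied to a basic column of height $r$ neither modifies any string lengths in $\nu$ nor changes any labels; only the vacancy numbers shift because of the new $\lambda$. Hence $\Phi(\nu, J, \lambda')$ is obtained from $\Phi(\nu, J, \lambda)$ by inserting a single basic column of height $r$, placing the two tableaux in the same equivalence class of $\TT^L$.

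Armed with the moreover statement, it suffices for the first claim to show that the marginally extra valid tableau $\Phi(\nu, J, \lambda_\nu)$ is marginally large (since adding basic columns to a large tableau preserves largeness). We induct on $|\nu|$. For the base case $\nu = \emptyset$, $\Phi(\nu_\emptyset, J_\emptyset) = T_{\lambda_{\nu_\emptyset}}$, and a direct inspection in each type confirms that this is precisely the marginally large highest weight tableau of Definition~\ref{ABCDG-tab}. For example, in type $A_n$ we have $\lambda_{\nu_\emptyset} = \sum_{a=1}^n \Lambda_a$, so $T_{\lambda_{\nu_\emptyset}}$ has row $i$ consisting of $n+1-i$ copies of $i$.

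For the inductive step, choose $a \in I$ with $e_a(\nu, J) \neq 0$, set $(\nu', J') := e_a(\nu, J)$, and note that $\lambda_{\nu'} = \lambda_\nu - \Lambda_a$. By induction $\Phi(\nu', J', \lambda_{\nu'})$ is marginally large, and by the moreover part $T' := \Phi(\nu', J', \lambda_\nu)$ is obtained from it by inserting one extra basic column of height $a$. By Theorem~\ref{thm:iso_columns}, $T := \Phi(\nu, J, \lambda_\nu) = f_a T'$. The main obstacle is to verify that $f_a T'$ is marginally large. A signature-rule analysis shows that every $a$-box in row $a$ of $T'$ is bracketed against an $(a+1)$-box further along in the reading word, \emph{except} for the $a$-box contributed by the newly inserted basic column of height $a$; this is forced by the marginal largeness of the underlying tableau. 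Hence $f_a$ necessarily acts on this distinguished $a$-box, converting it to an $(a+1)$-box, and a direct count shows the result is marginally large of shape $\lambda_\nu$. The precise signature-rule computation is type-dependent and must be carried out for each of $A_n$, $B_n$, $C_n$, $D_{n+1}$, and $G_2$, using the specific features (for instance, $0$-boxes in $B_n$ and $G_2$, and the $n+1,\overline{n+1}$ pairing in $D_{n+1}$) dictated by Definition~\ref{ABCDG-tab}.
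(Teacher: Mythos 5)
There is a genuine gap in your treatment of the first claim. Your reduction rests on the assertion that the marginally extra valid image $\Phi(\nu,J,\lambda_\nu)$ is \emph{marginally} large, and your induction tries to prove exactly that; but this is false, because $\lambda_\nu$ is not the minimal weight for which $(\nu,J)$ is valid, and $\Phi(\nu,J,\lambda_\nu)$ is in general only large. Concretely, in type $A_3$ take $(\nu,J) = f_2 f_1(\nu_\emptyset, J_\emptyset)$, so $\lambda_\nu = 2\Lambda_1 + 2\Lambda_2 + \Lambda_3$ and $\Phi(\nu,J,\lambda_\nu) = f_2 f_1 T_{\lambda_\nu}$ has rows $(1,1,1,1,3)$, $(2,2,2)$, $(3)$: the second row contains three $2$'s while the third row has one box, so the tableau is large but not marginally large. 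The same example refutes the signature-rule claim driving your inductive step: with $T'$ having rows $(1,1,1,1,2)$, $(2,2,2)$, $(3)$, the operator $f_2$ acts on the $2$-box in the \emph{first} row (the leftmost surviving plus in the reading word), not on the bottom box of the inserted basic column of height $2$; in general $f_a T'$ need not touch the inserted column at all, and need not be marginally large. So the induction cannot close, and the type-by-type computation you defer would fail rather than merely be tedious. (Your argument for the ``moreover'' part, running the column analysis of Lemma~\ref{lemma:embed_MLT} backwards and invoking bijectivity of $\Phi$, is a workable alternative to the paper's forward analysis of $\delta$, provided you also justify that the columns processed after the inserted basic column are unaffected by it; the paper checks this directly.)

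The paper's proof of largeness avoids crystal operators and induction entirely: if $\Phi(\nu,J)$ were not large, then while unwinding some column of height $r$ the algorithm $\delta$ would return a letter $x \succ a$ at a row position $a < r$ (or a non-basic bottom letter in the leftmost column of height $r$), which removes a box from $\nu^{(a)}$; semistandardness then forces each of the at least $\inner{h_a}{\lambda} \geq \lvert \nu^{(a)} \rvert + 1$ remaining columns meeting row $a$ to remove a further box from $\nu^{(a)}$, exceeding the $\lvert \nu^{(a)} \rvert$ boxes available. This counting argument is precisely where the hypothesis $\lambda \geq \lambda_\nu$ enters, and it delivers the (correct, weaker) conclusion $\Phi(\nu,J) \in \TT^L$ directly. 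You would need to replace your strengthened inductive statement with an argument of this kind.
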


\begin{proof}
Fix some extra valid rigged configuration $(\nu, J)$. Suppose $\Phi(\nu, J) \notin \TT^L$. Therefore during the procedure of $\Phi$ in a column of height $r$ at height $a < r$, we return $x \geq a$, so we remove at least one box from $\nu^{(a)}$. Therefore we must remove at least
\[
1 + \inner{\alpha_a}{\lambda_{\nu}} = 1 + 1 + |\nu^{(a)}|
\]
boxes from $\nu^{(a)}$ since we must return at least $x$ by the semistandard condition. This is a contradiction, and so we must return $a$. Similarly for the left-most column of height $r$, we must return $r$. Hence $\Phi(\nu, J) \in \TT^L$.

Next we show $\Phi(\nu', J') \in [\Phi(\nu, J)]$. Consider $(\nu', J') \in \RC(\lambda_{\nu'})$ such that $\lambda_{\nu'} - \lambda_{\nu} = \Lambda_r$ for some $1 \leq r \leq n$. We will show that $\Phi(\nu', J')$ differs from $\Phi(\nu, J)$ by a basic column of height $r$. We note that
\begin{equation}
\label{eq:diff_vac_nums}
p_i^{(a)}(\nu') - p_i^{(a)}(\nu) = \delta_{ar},
\end{equation}
and therefore all columns of height at least $r+1$ are equal under $\Phi$. That is $\delta$ returns the same elements on both $(\nu', J')$ and $(\nu, J)$. Furthermore, once we've removed all such columns (there are the same number of columns in each), the results are equivalent such that the difference of the weights is still $\Lambda_r$. Hence Equation~\eqref{eq:diff_vac_nums} still holds.

Now we have one additional column of height $r$ in $\lambda_{\nu'}$. From Equation~\eqref{eq:diff_vac_nums}, we must have all strings of $\nu'^{(r)}$ being non-singular. Therefore $\delta$ returns $r$, and we increase all vacancy numbers of $\nu'^{(r-1)}$. Thus all strings of $\nu'^{(r-1)}$ are non-singular and iterating this, we see that we return a basic column of height $r$.

At this point, the resulting rigged configurations are equal (not just equivalent as they have the same weight), and hence the remaining result from $\Phi$ are equal. Since there exists a unique element of minimal weight in $[(\nu, J)]$, the claim follows by induction.
\end{proof}

The following lemma is analogous to~\cite[Lemma~3.2]{HL08}, which shows that the crystal operators are well-defined on equivalence classes.

\begin{lemma}
\label{lemma:preserve_Kashiwara}
Fix $a\in I$.
\begin{enumerate}
\item If $(\nu,J) \in \RC^{EV}$, then $f_a(\nu,J) \neq 0$.
\item Given any element of $\RC(\infty)$, we can always find a representative $(\nu,J) \in \RC^{EV}$ such that $f_a(\nu,J)$ is a valid rigged configuration.
\item If $(\nu,J)$ and $(\nu',J')$ are in the same equivalence class in $\RC^V/\sim$, then $[f_a(\nu,J)] = [f_a(\nu',J')]$.
\item If $(\nu,J)$ is valid, then $e_a(\nu,J)$ is either valid or zero.
\item If $(\nu,J)$ and $(\nu',J')$ are in the same equivalence class in $\RC^V/\sim$, then either $[e_a(\nu,J)] = [e_a(\nu',J')]$ or both $e_a(\nu,J) = e_a(\nu',J') = 0$.
\end{enumerate}
\end{lemma}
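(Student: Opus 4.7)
The plan is to exploit a common structural feature shared by all five parts: the rules defining $f_a$ and $e_a$ depend only on the underlying partitions and riggings $(\nu, J)$, never on the ambient weight $\lambda$, while validity in $\RC(\lambda)$ is a comparison between labels and the vacancy numbers \eqref{eq:vacancy} that depend on $\lambda$ only through the $L^{(a)}$'s. The definition of $\lambda_\nu$ is engineered precisely to supply the correct amount of slack in each $L^{(a)}$ to absorb one application of $f_a$ without creating a label that exceeds its vacancy number.

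For part (1), I would compute directly how the vacancy numbers change when $f_a$ either appends $(1,-1)$ to $\nu^{(a)}$ or lengthens a string $(\ell, x) \to (\ell+1, x-1)$. Both operations add one box to $\nu^{(a)}$ and leave the other $\nu^{(b)}$ alone, so the effect on each $p_i^{(b)}$ is a single easily-computed increment involving $A_{ba}$. Starting from $\RC(\lambda_\nu)$, where $L^{(a)} \geq |\nu^{(a)}|+1$ for $a < n$ (with the type-specific analog at $a = n$), a short calculation then shows that the new label lies below the new vacancy number; monotonicity of vacancy numbers in $\lambda$ extends the conclusion to all $\lambda \geq \lambda_\nu$. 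Part (2) is then immediate: by the preceding lemma every $(\nu, J) \in \RC(\infty)$ already sits inside $\RC(\lambda_\nu) \subseteq \RC^{EV}$, so (1) supplies the required representative.

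Part (3) is essentially formal. Since the $f_a$ rule operates purely on $(\nu, J)$, two equivalent valid representatives $(\nu, J, \lambda)$ and $(\nu, J, \lambda')$ yield outputs with identical underlying partitions and riggings, differing only in the weight component; this is exactly the relation \eqref{eq:equivalence_class}, so the resulting elements lie in the same class. By part (2) one may always choose representatives on which $f_a$ is genuinely valid, so $f_a$ descends unambiguously to $\RC^V / \sim$.

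Parts (4) and (5) run in parallel. For (4), either $e_a(\nu, J) = 0$ because the smallest label satisfies $x \geq 0$, or $e_a$ replaces the minimal string $(\ell, x)$ by $(\ell-1, x+1)$ and adjusts other labels to preserve colabels. The other strings stay valid by the colabel-preservation convention (their colabels remain nonnegative), so only the modified string requires attention; a direct vacancy-number computation, using the minimality of $\ell$ among strings with smallest label $x$ and the known second-difference identity $p_{i-1}^{(a)} - 2p_i^{(a)} + p_{i+1}^{(a)} = \sum_b A_{ab} m_i^{(b)}$, yields $p_{\ell-1}^{(a)}(\nu') \geq x + 1$. Part (5) follows from the same two observations driving (3): the criterion $x \geq 0$ and the resulting underlying data depend only on $(\nu, J)$, so either both representatives return $0$ simultaneously, or both produce the same underlying configuration, giving a single equivalence class. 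The main obstacle throughout is the precise vacancy-number bookkeeping in (1) and (4), especially reconciling the distinguished-node behavior at $a = n$ across the four type-specific formulas defining $\lambda_\nu^{(n)}$; once those case checks are executed, the remaining parts reduce to the $\lambda$-independence of the crystal rules.
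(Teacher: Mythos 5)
Your proposal is correct and follows the same route the paper intends: the paper's proof of this lemma is simply the assertion that ``these statements can be seen directly from the definitions,'' and your sketch is precisely that direct verification carried out --- tracking how the vacancy numbers \eqref{eq:vacancy} shift under $f_a$ and $e_a$, using the slack $L^{(a)} \geq \lvert \nu^{(a)} \rvert + 1$ built into $\lambda_{\nu}$ together with monotonicity in $\lambda$, and observing that the crystal-operator algorithms depend only on $(\nu,J)$ and not on the ambient weight. Your identification of the delicate points (the vacancy-number bookkeeping in parts (1) and (4), including the type-specific cases at $a=n$, and the convexity/second-difference argument when no string of length $\ell-1$ is present) matches the computations that the earlier lemma of the paper performs explicitly and that this proof leaves implicit.
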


\begin{proof}
These statements can be seen directly from the definitions.
\end{proof}

Thus we can define
\begin{subequations}
\label{eq:crystal_structure}
\begin{align}
e_a [(\nu, J)] & = [e_a(\nu, J)]
\\ f_a [(\nu, J)] & = [f_a(\nu, J)]
\\ \wt [(\nu, J)] & = \sum_{a \in I} \lvert \nu^{(a)} \rvert \Lambda_a,
\\ \varepsilon_a [(\nu, J)] & = \max\{ e_a^k(\nu, J) \neq 0 \mid k \in \ZZ_{>0} \},
\\ \varphi_a [(\nu ,J)] & = \varepsilon_a [(\nu, J)] + \inner{\wt [(\nu, J)]}{h_a},
\end{align}
\end{subequations}
for any $[(\nu, J)] \in \RC^V / \sim$ with appropriate representative $(\nu, J)$. Therefore a straightforward check shows the following.

\begin{prop}
Equation~\eqref{eq:crystal_structure} defines an abstract $U_q(\g)$-crystal structure on $\RC^V / \sim$.
\end{prop}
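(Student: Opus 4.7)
The approach is to verify the three defining axioms of an abstract $U_q(\g)$-crystal for the structure prescribed by \eqref{eq:crystal_structure} on $\RC^V/\sim$. The well-definedness of $e_a$ and $f_a$ on equivalence classes is precisely the content of Lemma \ref{lemma:preserve_Kashiwara}(3) and (5), while the weight, $\varepsilon_a$, and $\varphi_a$ depend only on the underlying data $(\nu, J)$ and not on the choice of $\lambda \in W_{(\nu,J)}$, so they too are constant on equivalence classes. In particular, none of the defining expressions in \eqref{eq:crystal_structure} references the extra parameter $\lambda$, so there is no compatibility to check beyond Lemma \ref{lemma:preserve_Kashiwara}.

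For axiom (1), namely $f_a b = b'$ if and only if $b = e_a b'$, I would pick an extra valid representative $(\nu, J) \in \RC^{EV}$, which is guaranteed by Lemma \ref{lemma:preserve_Kashiwara}(2) and satisfies $f_a(\nu, J) \neq 0$ by Lemma \ref{lemma:preserve_Kashiwara}(1). The axiom then holds for the underlying rigged configurations inside the crystal $\RC(\lambda_\nu)$ via Theorem \ref{thm:RCinf_nsl} (or its analog for finite $\lambda$), and Lemma \ref{lemma:preserve_Kashiwara}(3) and (5) transport the equivalences $[f_a(\nu,J)] = [(\nu',J')]$ and $[e_a(\nu',J')] = [(\nu,J)]$ through the quotient without ambiguity.

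For axiom (2), one reads off directly from the definition of $f_a$ that applying $f_a$ either replaces a string $(\ell, x)$ by $(\ell+1, x-1)$ or adjoins a new string $(1, -1)$; in either case $\lvert \nu^{(a)} \rvert$ increases by exactly one while all other $\lvert \nu^{(b)} \rvert$ remain unchanged. Comparing with the weight formula then yields $\wt(f_a[(\nu,J)]) = \wt[(\nu,J)] - \alpha_a$ in a single line. For axiom (3), since $\varphi_a[(\nu,J)]$ was \emph{defined} to be $\varepsilon_a[(\nu,J)] + \inner{\wt[(\nu,J)]}{h_a}$, the identity $\varphi_a - \varepsilon_a = \inner{h_a}{\wt}$ is tautological.

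There is no serious obstacle here: the real work is already sequestered in Lemma \ref{lemma:preserve_Kashiwara}, which guarantees both that $e_a, f_a$ descend to the quotient and that one can always choose a representative on which $f_a$ acts as a valid rigged configuration. The remaining axioms collapse to either citing the known crystal structure on each $\RC(\lambda)$ or to an immediate inspection of the Kashiwara operator definitions, so I expect the proof to consist essentially of these observations presented in sequence.
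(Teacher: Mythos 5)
Your approach coincides with the paper's: the authors give no written proof at all, introducing the proposition with ``Therefore a straightforward check shows the following'' immediately after Lemma~\ref{lemma:preserve_Kashiwara}, and your expansion---well-definedness of $e_a$ and $f_a$ on classes from parts (3) and (5) of that lemma, axiom (1) via an extra valid representative and the known crystal structure on $\RC(\lambda_\nu)$, axiom (3) holding by definition of $\varphi_a$---is exactly the intended check. One concrete caveat, however: your ``single line'' verification of axiom (2) does not match the formula as displayed. Equation~\eqref{eq:crystal_structure} sets $\wt[(\nu,J)] = \sum_{a\in I}\lvert\nu^{(a)}\rvert\Lambda_a$, and since $f_a$ increases $\lvert\nu^{(a)}\rvert$ by exactly one, this quantity changes by $+\Lambda_a$ under $f_a$, not by $-\alpha_a$; so axiom (2) fails for the literal formula. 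Your computation is valid only if one reads the weight as $-\sum_{a\in I}\lvert\nu^{(a)}\rvert\alpha_a$, i.e.\ the weight inherited from $\RC(\infty)$ via~\eqref{Binf_wt}, which is presumably what was intended (the displayed $\Lambda_a$-expression instead records the data determining $\lambda_\nu$). You should either flag this discrepancy or state explicitly which weight function you are verifying the axiom against; as written, the sentence asserts that a comparison yields a conclusion it does not yield.
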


Define a map $\Psi\colon \RC(\infty) \longrightarrow \TT(\infty)$ using the sequence of maps
\[
\begin{array}{ccccccccc}
\RC(\infty) &\longdoublearrow& \RC(\lambda_\nu) &\longhookarrow& B^{\otimes \lambda_\nu} &\longdoublearrow& \TT(\lambda_\nu) &\longhookarrow& \TT(\infty), \\
(\nu,J) &\mapsto& (\nu,J) &\mapsto& (\nu,J) &\mapsto& \Phi(\nu,J) &\mapsto& \Phi(\nu,J).
\end{array}
\]
Conversely, for $T\in \TT(\infty)$, let $\lambda_T \in P^+$ partition shape of $T$.  There is a natural crystal embedding of a tableau $T$ into a tensor product of its columns in $B^{\otimes\lambda_T}$.  Denote the image of $T$ in $B^{\otimes\lambda_T}$ by $T^{\otimes\lambda_T
}$.  Now define a map $\Xi\colon \TT(\infty) \longrightarrow \RC(\infty)$ by the sequence of maps
\[
\begin{array}{ccccccccc}
\TT(\infty) &\longdoublearrow& \TT(\lambda_T) &\longhookarrow& B^{\otimes \lambda_T} &\longdoublearrow& \RC(\lambda_T)& \longhookarrow &\RC(\infty),\\
T &\mapsto& T &\mapsto& T^{\otimes\lambda_T} &\mapsto& \Phi^{-1}(T^{\otimes\lambda_T}) &\mapsto& \Phi^{-1}(T^{\otimes\lambda_T}).
\end{array}
\]

\begin{thm}
We have $\Xi\circ\Psi = \id_{\RC(\infty)}$ and $\Psi\circ\Xi = \id_{\TT(\infty)}$.
\end{thm}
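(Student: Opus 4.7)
The plan is to reduce both identities to the single key claim that $\Phi$ sends the marginally extra valid representative of each equivalence class in $\RC^{EV}/\sim$ to the marginally large representative of the corresponding class in $\TT^L/\sim$; once this is established, both identities follow from Proposition~\ref{prop:classical_bijection}.

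First I would verify well-definedness of $\Psi$ and $\Xi$. For $\Psi$: since $(\nu,J) \in \RC(\lambda_\nu)$ by the lemma preceding Lemma~\ref{lemma:embed_MLT}, Proposition~\ref{prop:classical_bijection} gives $\Phi(\nu,J) \in \TT(\lambda_\nu)$, and Lemma~\ref{lemma:embed_RC} places this in $\TT^L$; the unique marginally large tableau in $[\Phi(\nu,J)]$ is the image in $\TT(\infty)$. For $\Xi$: the embedding $T^{\otimes \lambda_T}$ lies in $\TT(\lambda_T) \subseteq B^{\otimes \lambda_T}$, so $\Phi^{-1}(T^{\otimes \lambda_T}) \in \RC(\lambda_T) \subseteq \RC(\infty)$.

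The heart of the proof is the key claim: $\Phi \colon \RC(\lambda_\nu) \longrightarrow \TT(\lambda_\nu)$ sends the marginally extra valid $(\nu,J) \in \RC(\lambda_\nu)$ to a marginally large tableau of shape $\lambda_\nu$. I would prove this by induction on $\sum_a |\nu^{(a)}|$. The base case $(\nu_\emptyset, J_\emptyset) \mapsto T_{\lambda_{\nu_\emptyset}}$ is checked directly: the ``$+1$'' terms appearing in each coefficient of $\lambda_{\nu_\emptyset}$ are designed precisely so that each row $i$ of $T_{\lambda_{\nu_\emptyset}}$ has exactly one more $i$-box than the length of row $i+1$. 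For the inductive step, apply $f_a$; since $\Phi$ commutes with $f_a$ on each fixed $\RC(\lambda)$ by Proposition~\ref{prop:classical_bijection}, and since the change $\lambda_{\nu'} - \lambda_\nu$ (equal to $\Lambda_a$ generically, to $2\Lambda_n$ for $\g = B_n$ with $a = n$, and to $\Lambda_n + \Lambda_{n+1}$ or $0$ for $\g = D_{n+1}$ with $a \in \{n, n+1\}$) matches the shape change of the marginally large representative on the tableau side, the induction closes. The matching on the tableau side uses the discussion preceding Section~\ref{sec:rigcon}: $f_a$ applied to a marginally large tableau either remains marginally large or requires adding a basic column of height $a$ to restore marginal largeness.

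Granted the key claim, both identities are immediate. For $\Xi \circ \Psi = \id_{\RC(\infty)}$: $\Psi(\nu, J) = \Phi(\nu, J)$ is marginally large of shape $\lambda_\nu$, so $\Xi(\Psi(\nu, J)) = \Phi^{-1}(\Phi(\nu, J)^{\otimes \lambda_\nu}) = \Phi^{-1}(\Phi(\nu, J)) = (\nu, J)$. For $\Psi \circ \Xi = \id_{\TT(\infty)}$: setting $(\nu, J) = \Xi(T) = \Phi^{-1}(T^{\otimes \lambda_T}) \in \RC(\lambda_T)$, the key claim combined with uniqueness of marginally extra valid representatives forces $\lambda_T = \lambda_\nu$, whence $\Psi(\Xi(T)) = \Phi(\nu, J) = T$. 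The main obstacle will be the inductive step of the key claim in types $B_n$ and $D_{n+1}$, where $\lambda_\nu$ involves the special forms $2\Lambda_n$ and $(\max(|\nu^{(n)}|, |\nu^{(n+1)}|) + 1)(\Lambda_n + \Lambda_{n+1})$; the interaction between the RC-side change in $\lambda_\nu$ under $f_n$ or $f_{n+1}$ and the tableau-side addition (or non-addition) of basic columns of height $n$ or $n+1$ requires careful type-by-type verification, guided by the structure of the alphabet $J(X_n)$ and the algorithm $\delta$.
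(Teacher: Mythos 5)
Your central ``key claim'' is false, and the paper's own Example~\ref{ex:full_bijection} is a counterexample. There the marginally large tableau $T$ has shape $\lambda_T = 3\Lambda_1 + 2\Lambda_2 + 4\Lambda_3 + \Lambda_4$, while the rigged configuration $(\nu,J) = \Xi(T)$ has $\lvert\nu^{(1)}\rvert = 2$, $\lvert\nu^{(2)}\rvert = 3$, $\lvert\nu^{(3)}\rvert = 4$, $\lvert\nu^{(4)}\rvert = 3$, so that $\lambda_{\nu} = 3\Lambda_1 + 4\Lambda_2 + 5\Lambda_3 + 4\Lambda_4 \neq \lambda_T$ (the paper uses exactly this $\lambda_{\nu}$ in the subsequent statistics example). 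Hence $\Phi$ carries the marginally extra valid representative $(\nu,J) \in \RC(\lambda_{\nu})$ to a tableau of shape $Y_{\lambda_{\nu}}$ which is large but \emph{not} marginally large: it is $T$ padded with several basic columns. The reason your induction cannot close is the mismatch you half-acknowledge in your last sentence: in $\RC(\infty)$ the operator $f_a$ always adds a box to $\nu^{(a)}$, so $\lambda_{\nu}$ always grows by $\Lambda_a$ (or its type-specific analogue), whereas $f_a$ applied to a marginally large tableau only \emph{sometimes} forces the insertion of a basic column of height $a$ --- often $f_aT$ is already marginally large and $\lambda_{f_aT} = \lambda_T$. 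The two shapes therefore drift apart after the first such step, in every type, not just $B_n$ and $D_{n+1}$. Consequently your deduction of $\Psi\circ\Xi = \id$ from ``uniqueness of marginally extra valid representatives forces $\lambda_T = \lambda_{\nu}$'' collapses, and the computation $\Xi(\Psi(\nu,J)) = \Phi^{-1}(\Phi(\nu,J))$ is not available as written, since $\Psi(\nu,J)$ is the marginally large element of $[\Phi(\nu,J)]$, which lives in a \emph{different} $\TT(\lambda)$ than $\Phi(\nu,J)$ does.

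The repair is to argue at the level of equivalence classes rather than trying to match the two distinguished representatives through a single map $\Phi\colon\RC(\lambda_{\nu})\to\TT(\lambda_{\nu})$; this is what the paper does. One uses that $\Phi$ is a bijection $\RC(\lambda)\to\TT(\lambda)$ for every $\lambda$ (Theorem~\ref{thm:iso_columns} and Proposition~\ref{prop:classical_bijection}) together with Lemmas~\ref{lemma:embed_MLT} and~\ref{lemma:embed_RC}, which say that $\Phi$ and $\Phi^{-1}$ respect the two equivalence relations (differing by basic columns on the tableau side corresponds to the relation~\eqref{eq:equivalence_class} on the rigged configuration side). Since each class of large tableaux contains a unique marginally large element and each class of valid rigged configurations is determined by the single datum $(\nu,J)$, the maps induced on classes are mutually inverse, which is precisely $\Xi\circ\Psi = \id_{\RC(\infty)}$ and $\Psi\circ\Xi = \id_{\TT(\infty)}$. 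Your well-definedness checks in the first paragraph are correct and form part of this argument; it is only the ``heart'' that needs to be replaced.
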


\begin{proof}
Given a marginally large tableaux $T$ of shape $\lambda_T$, begin by projecting down to $B(\lambda_T)$. This preserves the tableaux $T$ and consider the natural embedding $T^{\prime}$ in $\bigotimes_i B^{r_i,1}$ given by Equation~\eqref{eq:subcrystal}. Next take $\Phi(T^{\prime})$, recall from Theorem~\ref{thm:iso_columns} that $\Phi$ is a bijection, and lift the resulting rigged configuration to $\RC(\infty)$. Last, we note that this procedure is well-defined over the equivalence class of large tableaux by Lemma~\ref{lemma:embed_MLT} and Lemma~\ref{lemma:embed_RC}, so this gives us the desired bijection.
\end{proof}

\begin{cor}
\label{cor:bijection}
The bijection $\Psi$ is a crystal isomorphism.
\end{cor}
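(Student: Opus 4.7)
The preceding theorem already gives that $\Psi$ is a bijection between $\RC(\infty)$ and $\TT(\infty)$, so what remains is to check that it respects the crystal structure. My plan is to verify that $\Psi \circ f_a = f_a \circ \Psi$ for every $a \in I$; from this, together with weight preservation, the axioms force matching $\varepsilon_a$ and $\varphi_a$, and commutation with $e_a$ follows automatically since both sides are models of $B(\infty)$ and its automorphism group is trivial (so any bijective morphism between them is an isomorphism).

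The key idea is to reduce each equality $\Psi(f_a(\nu,J)) = f_a \Psi(\nu,J)$ to the already-known classical isomorphism statement in Proposition~\ref{prop:classical_bijection}. Given $(\nu,J) \in \RC(\infty)$ and $a \in I$, I would first invoke Lemma~\ref{lemma:preserve_Kashiwara}(2) to choose a weight $\lambda$ large enough that the representative $(\nu,J)$ lies in $\RC^{EV} \cap \RC(\lambda)$ and $f_a(\nu,J)$ is still a valid rigged configuration in $\RC(\lambda)$. Proposition~\ref{prop:classical_bijection} then gives $\Phi(f_a(\nu,J)) = f_a \Phi(\nu,J)$ as an equality inside $\TT(\lambda)$. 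Projecting down to $\TT(\infty)$ via the equivalence relation that identifies basic-column additions and taking the marginally large representative on each side yields $\Psi(f_a(\nu,J)) = f_a \Psi(\nu,J)$, provided the projection is compatible with $f_a$ on each side.

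That compatibility is the place where care is needed, and it is the step I expect to be the main obstacle. On the $\TT$ side, the end of Section~\ref{sec:tableaux} (following \cite{HL08}) records that $f_a$ preserves equivalence classes under addition of basic columns and that the marginally large representative of $[f_a T]$ in $\TT(\lambda)$ is exactly the element $f_a T \in \TT(\infty)$; on the $\RC$ side, Lemma~\ref{lemma:preserve_Kashiwara}(3) guarantees the analogous statement for $\RC^V/\sim$, and Lemma~\ref{lemma:embed_RC} guarantees that $\Phi$ intertwines the two equivalence relations. Stringing these together gives the required equality. The argument for $e_a$ is symmetric, using Lemma~\ref{lemma:preserve_Kashiwara}(4)--(5) to ensure that validity is preserved and that $e_a$ descends to equivalence classes, and then Lemma~\ref{lemma:embed_MLT} to transfer the equivalence from $\TT^L$ back to $\RC^V/\sim$.

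Finally, weight preservation reduces to the same sort of reduction: $\Phi$ preserves weight as a crystal isomorphism $\RC(\lambda) \cong \TT(\lambda)$, and the projections $\RC(\lambda) \to \RC(\infty)$ and $\TT(\lambda) \to \TT(\infty)$ both subtract the common highest weight $\lambda$, so the weight formula~\eqref{Binf_wt} matches the weight of $\Psi(\nu,J)$ read off from its tableau entries. With $\Psi$ weight-preserving, bijective, and commuting with all $f_a$, it is a crystal isomorphism, completing the proof of Corollary~\ref{cor:bijection}.
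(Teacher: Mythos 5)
Your proposal is correct and follows essentially the same route as the paper: both reduce the commutation of $\Psi$ with the Kashiwara operators to the classical isomorphism $\Phi\colon \RC(\lambda) \to \TT(\lambda)$ of Proposition~\ref{prop:classical_bijection} at a sufficiently large (extra valid) weight, and both invoke Lemma~\ref{lemma:preserve_Kashiwara} to ensure the operators descend compatibly to $\RC(\infty)$ and $\TT(\infty)$. The paper simply fixes the specific weight $\lambda_\nu$ and writes the resulting chain of equalities $e_a^{\TT(\infty)}\Psi(\nu,J) = \cdots = e_a^{\RC(\infty)}(\nu,J)$ explicitly, which is the same computation you describe.
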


\begin{proof}
This follows from the fact that $\Phi$ is a (classical) crystal isomorphism. Indeed, the map $\Psi$ is well-defined as a crystal morphism by Lemma~\ref{lemma:preserve_Kashiwara}. Let $(\nu,J) \in \RC(\infty)$ and $a\in I$. Next, denote the $e_a$ operator in the crystal $X$ by $e_a^X$. Then
\[
e_a^{\TT(\infty)}\Psi(\nu,J) = e_a^{\TT(\infty)}\Phi(\nu,J) = e_a^{\TT(\lambda_\nu)}\Phi(\nu,J) = e_a^{B^{\otimes\lambda_\nu}}(\nu,J) = e_a^{\RC(\lambda_\nu)}(\nu,J).
\]
Since $(\nu,J)$ is nonzero in $\RC(\lambda_\nu)$ by definition of $\lambda_\nu$, we have $e_a^{\RC(\lambda_\nu)}(\nu,J) = e_a^{\RC(\infty)}(\nu,J)$.  Thus $\Psi$ commutes with $e_a$.  Showing that $\Psi$ commutes with $f_a$ is similar.
\end{proof}

\begin{remark}
Consider some $T \in B(\infty)$, we can explicitly describe the image of $T$ when projecting under $p_{\lambda}$ to $B(\lambda)$ for all $\lambda \in P^+$. From this, we define an equivalence class $[T] = \{ p_{\lambda}(T) \mid p_{\lambda} \neq 0,\ \lambda \in P^+ \}$. We can check that $[T]$ corresponds to the class of all valid rigged configurations under $\Phi$.
\end{remark}

\begin{ex}
\label{ex:full_bijection}
Let $\g$ be of type $A_4$ and
\[
T = \begin{tikzpicture}[baseline]
\matrix [tab] 
 {
 	\node[draw,fill=gray!30]{1}; &  
	\node[draw,fill=gray!30]{1}; &
	\node[draw,fill=gray!30]{1}; & 
	\node[draw,fill=gray!30]{1}; & 
	\node[draw,fill=gray!30]{1}; & 
	\node[draw,fill=gray!30]{1}; & 
	\node[draw,fill=gray!30]{1}; & 
	\node[draw,fill=gray!30]{1}; & 
	\node[draw]{3}; & 
	\node[draw]{3}; \\
  	\node[draw,fill=gray!30]{2}; &  
	\node[draw,fill=gray!30]{2}; &  
	\node[draw,fill=gray!30]{2}; & 
	\node[draw,fill=gray!30]{2}; &
	\node[draw,fill=gray!30]{2}; & 
	\node[draw,fill=gray!30]{2}; & 
	\node[draw]{5}; \\
  	\node[draw,fill=gray!30]{3}; & 
  	\node[draw,fill=gray!30]{3}; &
	\node[draw]{4}; &
	\node[draw]{5}; &
	\node[draw]{5}; \\
  	\node[draw,fill=gray!30]{4}; \\
 };
\end{tikzpicture}.
\]
We first project onto $B(\lambda) \subseteq B^{\otimes \lambda}$ with $\lambda = \Lambda_4 + 4 \Lambda_3 + 2 \Lambda_2 + 3 \Lambda_1$ (which results in the same tableaux). Next we apply $\Phi \colon B^{\otimes \lambda} \longrightarrow \RC(B^{\otimes \lambda})$, and have the following:
\[
\begin{tikzpicture}[scale=.35]
 \rpp{1}{0}{0}
\begin{scope}[xshift=6cm]
 \rpp{1}{-1}{-1}
\end{scope}
\begin{scope}[xshift=12cm]
 \draw (0,-1.5) node {$\emptyset$};
\end{scope}
\begin{scope}[xshift=19cm]
 \draw (0,-1.5) node {$\emptyset$};
\end{scope}
\begin{scope}[xshift=25.5cm,yshift=-1.3cm]
\matrix [tab] 
 {
	\node[draw]{3}; \\
 };
\end{scope}
%%%
\begin{scope}[yshift=-3cm]
 \rpp{2}{0}{0}
\begin{scope}[xshift=6cm]
 \rpp{2}{-2}{-2}
\end{scope}
\begin{scope}[xshift=12cm]
 \draw (0,-1.5) node {$\emptyset$};
\end{scope}
\begin{scope}[xshift=19cm]
 \draw (0,-1.5) node {$\emptyset$};
\end{scope}
\begin{scope}[xshift=26cm,yshift=-1.3cm]
\matrix [tab] 
 {
	\node[draw]{3}; & 
	\node[draw]{3}; \\
 };
\end{scope}
\end{scope} % yshift
%%%
\begin{scope}[yshift=-6cm]
 \rpp{2}{0}{1}
\begin{scope}[xshift=6cm]
 \rpp{2}{-2}{-2}
\end{scope}
\begin{scope}[xshift=12cm]
 \draw (0,-1.5) node {$\emptyset$};
\end{scope}
\begin{scope}[xshift=19cm]
 \draw (0,-1.5) node {$\emptyset$};
\end{scope}
\begin{scope}[xshift=26.5cm,yshift=-1.3cm]
\matrix [tab] 
 {
	\node[draw,fill=gray!30]{1}; & 
	\node[draw]{3}; & 
	\node[draw]{3}; \\
 };
\end{scope}
\end{scope} % yshift
%%%
\begin{scope}[yshift=-9cm]
 \rpp{2}{0}{2}
\begin{scope}[xshift=6cm]
 \rpp{2,1}{-2,-1}{-2,-1}
\end{scope}
\begin{scope}[xshift=12cm]
 \rpp{1}{1}{1}
\end{scope}
\begin{scope}[xshift=19cm]
 \rpp{1}{-1}{-1}
\end{scope}
\begin{scope}[xshift=27cm,yshift=-1.5cm]
\matrix [tab] 
 {
 	\node[draw,fill=gray!30]{1}; & 
	\node[draw,fill=gray!30]{1}; & 
	\node[draw]{3}; & 
	\node[draw]{3}; \\
	\node[draw]{5}; \\
 };
\end{scope}
\end{scope} % yshift
%%%
\begin{scope}[yshift=-12cm]
 \rpp{2}{0}{2}
\begin{scope}[xshift=6cm]
 \rpp{2,1}{-2,-1}{-1,0}
\end{scope}
\begin{scope}[xshift=12cm]
 \rpp{1}{1}{1}
\end{scope}
\begin{scope}[xshift=19cm]
 \rpp{1}{-1}{-1}
\end{scope}
\begin{scope}[xshift=27.5cm,yshift=-1.5cm]
\matrix [tab] 
 {
 	\node[draw,fill=gray!30]{1}; & 
 	\node[draw,fill=gray!30]{1}; & 
	\node[draw,fill=gray!30]{1}; & 
	\node[draw]{3}; & 
	\node[draw]{3}; \\
	\node[draw,fill=gray!30]{2}; & 
	\node[draw]{5}; \\
 };
\end{scope}
\end{scope} % yshift
%%%
\begin{scope}[yshift=-16cm]
 \rpp{2}{0}{2}
\begin{scope}[xshift=6cm]
 \rpp{2,1}{-2,-1}{0,0}
\end{scope}
\begin{scope}[xshift=12cm]
 \rpp{2}{2}{2}
\end{scope}
\begin{scope}[xshift=19cm]
 \rpp{2}{-2}{-2}
\end{scope}
\begin{scope}[xshift=28cm,yshift=-1.8cm]
\matrix [tab] 
 {
	\node[draw,fill=gray!30]{1}; & 
 	\node[draw,fill=gray!30]{1}; & 
 	\node[draw,fill=gray!30]{1}; & 
	\node[draw,fill=gray!30]{1}; & 
	\node[draw]{3}; & 
	\node[draw]{3}; \\
	\node[draw,fill=gray!30]{2}; & 
	\node[draw,fill=gray!30]{2}; & 
	\node[draw]{5}; \\
	\node[draw]{5}; \\
 };
\end{scope}
\end{scope} % yshift
%%%
\begin{scope}[yshift=-20cm]
 \rpp{2}{0}{2}
\begin{scope}[xshift=6cm]
 \rpp{2,1}{-2,-1}{0,0}
\end{scope}
\begin{scope}[xshift=12cm]
 \rpp{3}{2}{2}
\end{scope}
\begin{scope}[xshift=19cm]
 \rpp{3}{-3}{-3}
\end{scope}
\begin{scope}[xshift=28.5cm,yshift=-1.8cm]
\matrix [tab] 
 {
	\node[draw,fill=gray!30]{1}; & 
	\node[draw,fill=gray!30]{1}; & 
 	\node[draw,fill=gray!30]{1}; & 
 	\node[draw,fill=gray!30]{1}; & 
	\node[draw,fill=gray!30]{1}; & 
	\node[draw]{3}; & 
	\node[draw]{3}; \\
	\node[draw,fill=gray!30]{2}; & 
	\node[draw,fill=gray!30]{2}; & 
	\node[draw,fill=gray!30]{2}; & 
	\node[draw]{5}; \\
	\node[draw]{5}; & 
	\node[draw]{5}; \\
 };
\end{scope}
\end{scope} % yshift
%%%
\begin{scope}[yshift=-24cm]
 \rpp{2}{0}{2}
\begin{scope}[xshift=6cm]
 \rpp{2,1}{-2,-1}{0,0}
\end{scope}
\begin{scope}[xshift=12cm]
 \rpp{4}{1}{1}
\end{scope}
\begin{scope}[xshift=19cm]
 \rpp{3}{-3}{-3}
\end{scope}
\begin{scope}[xshift=29cm,yshift=-1.8cm]
\matrix [tab] 
 {
	\node[draw,fill=gray!30]{1}; & 
	\node[draw,fill=gray!30]{1}; & 
 	\node[draw,fill=gray!30]{1}; & 
 	\node[draw,fill=gray!30]{1}; & 
	\node[draw,fill=gray!30]{1}; & 
	\node[draw,fill=gray!30]{1}; & 
	\node[draw]{3}; & 
	\node[draw]{3}; \\
	\node[draw,fill=gray!30]{2}; & 
	\node[draw,fill=gray!30]{2}; & 
	\node[draw,fill=gray!30]{2}; & 
	\node[draw,fill=gray!30]{2}; & 
	\node[draw]{5}; \\
	\node[draw]{4}; & 
	\node[draw]{5}; & 
	\node[draw]{5}; \\
 };
\end{scope}
\end{scope} % yshift
%%%
\begin{scope}[yshift=-29cm]
 \rpp{2}{0}{2}
\begin{scope}[xshift=6cm]
 \rpp{2,1}{-2,-1}{0,0}
\end{scope}
\begin{scope}[xshift=12cm]
 \rpp{4}{1}{2}
\end{scope}
\begin{scope}[xshift=19cm]
 \rpp{3}{-3}{-2}
\end{scope}
\begin{scope}[xshift=30cm,yshift=-1.8cm]
\matrix [tab] 
 {
	\node[draw,fill=gray!30]{1}; & 
	\node[draw,fill=gray!30]{1}; & 
 	\node[draw,fill=gray!30]{1}; & 
 	\node[draw,fill=gray!30]{1}; & 
	\node[draw,fill=gray!30]{1}; & 
	\node[draw,fill=gray!30]{1}; & 
	\node[draw,fill=gray!30]{1}; & 
	\node[draw,fill=gray!30]{1}; & 
	\node[draw]{3}; & 
	\node[draw]{3}; \\
	\node[draw,fill=gray!30]{2}; & 
	\node[draw,fill=gray!30]{2}; & 
	\node[draw,fill=gray!30]{2}; & 
	\node[draw,fill=gray!30]{2}; & 
	\node[draw,fill=gray!30]{2}; & 
	\node[draw,fill=gray!30]{2}; & 
	\node[draw]{5}; \\
	\node[draw,fill=gray!30]{3}; & 
	\node[draw,fill=gray!30]{3}; & 
	\node[draw]{4}; & 
	\node[draw]{5}; & 
	\node[draw]{5}; \\
	\node[draw,fill=gray!30]{4}; \\
 };
\end{scope}
\end{scope} % yshift
\end{tikzpicture}
\]
By mapping back the rigged configuration into $\RC(\infty)$, we obtain
\[
\Xi(T) = 
\begin{tikzpicture}[scale=.35,baseline=-18]
 \rpp{2}{0}{-1}
\begin{scope}[xshift=6cm]
 \rpp{2,1}{-2,-1}{-2,-2}
\end{scope}
\begin{scope}[xshift=12cm]
 \rpp{4}{1}{-2}
\end{scope}
\begin{scope}[xshift=19cm]
 \rpp{3}{-3}{-3}
\end{scope}
\end{tikzpicture}.
\]
In Sage, we can reproduce the example using
\begin{lstlisting}
sage: B = crystals.infinity.Tableaux("A4")
sage: t = B(rows=[[1,1,1,1,1,1,1,1,3,3],\
....:             [2,2,2,2,2,2,5],[3,3,4,5,5],[4]])
sage: RC = crystals.infinity.RiggedConfigurations("A4")
sage: defn = {B.highest_weight_vector():RC.highest_weight_vector()};
sage: Xi = B.crystal_morphism(defn)
sage: t.pp()
  1  1  1  1  1  1  1  1  3  3
  2  2  2  2  2  2  5
  3  3  4  5  5
  4
sage: ascii_art(Xi(t))
-1[ ][ ]0  -2[ ][ ]-2  -2[ ][ ][ ][ ]1  -3[ ][ ][ ]-3
           -2[ ]-1 
\end{lstlisting}
\end{ex}

\begin{ex}
Let $\g$ be of type $A_3$ and
\[
(\nu, J) = 
\begin{tikzpicture}[scale=.35,baseline=-18]
 \rpp{2}{-1}{2}
\begin{scope}[xshift=6cm]
 \rpp{3,1}{-1,-1}{1,2}
\end{scope}
\begin{scope}[xshift=13cm]
 \rpp{3}{-1}{0}
\end{scope}
\end{tikzpicture}.
\]
Then 
\[
\Psi(\nu, J) = \begin{tikzpicture}[baseline]
\matrix [tab] 
 {
 	\node[draw,fill=gray!30]{1}; & 
	\node[draw,fill=gray!30]{1}; & 
	\node[draw,fill=gray!30]{1}; & 
	\node[draw,fill=gray!30]{1}; & 
	\node[draw,fill=gray!30]{1}; & 
	\node[draw,fill=gray!30]{1}; & 
	\node[draw,fill=gray!30]{1}; & 
	\node[draw]{2}; & 
	\node[draw]{3}; \\
  	\node[draw,fill=gray!30]{2}; &  
	\node[draw,fill=gray!30]{2}; & 
	\node[draw,fill=gray!30]{2}; &
	\node[draw]{3}; &
	\node[draw]{4}; &
	\node[draw]{4}; \\
  	\node[draw,fill=gray!30]{3}; & 
	\node[draw]{4}; \\
 };
\end{tikzpicture}.
\]
\end{ex}

\section{Statistics}
\label{sec:statistics}

Consider a marginally large tableau $T \in \TT(\infty)$. A \emph{$k$-segment} of $T$ is a maximal sequence of $k$-boxes in $T$. Let $\seg'(T)$ be the total number of segments of $T$ that are not $i$-sequences in the $i$-th row. In other words, this is the total number of segments of $T$ minus the hieght of $T$.

\begin{dfn}[\cite{LS12,LS14}]
The \emph{segment statistic $\seg$} on marginally large tableaux is defined type-by-type as follows.
\begin{itemize}
\item[$A_n$:] Define $\seg(T) := \seg'(T)$.
\item[$B_n$:] Let $e_B(T)$ be the number of rows $i$ the contain both a $0$-segment and $\bi$ segment. Define $\seg(T) := \seg'(T) - e_B(T)$.
\item[$C_n$:] Define $\seg(T) := \seg'(T)$.
\item[$D_{n+1}$:] Let $e_D(T)$ be the number of rows $i$ that contain an $\bi$-segment but neither a $(n+1)$-segment nor $\overline{n+1}$-segment. Define $\seg(T) = \seg'(T) + e_D(T)$.
\item[$G_2$:] Let $e_G(T)$ be 1 if $T$ contains a $0$-segment and $\bon$-segment in the first row and 0 otherwise. Define $\seg(T) = \seg'(T) - e_G(T)$.
\end{itemize}
\end{dfn}

Define $\delta_{(r)}$ by $\delta$ on $\RC(\lambda)$ and then embedding into $\RC(\lambda - \Lambda_r)$ where $r = \max \{a \in I \mid \inner{h_a}{\lambda} \neq 0 \}$. Equivalently $\delta_{(r)} = \delta^r$ but returning only the $b$ from the first application of $\delta$.

\begin{dfn}
The \emph{repeat statistic $\rpt$} on rigged configurations is given recursively as follows. Consider $(\nu, J) \in \RC(\lambda_{\nu})$. Start with $r = n$ and $s = 0$. Let $x_r^c$ denote the smallest colabel of $\nu^{(r)}$ and project $(\nu, J)$ into $\RC(\lambda_{(r)})$, where $\lambda_{(r)} = \lambda - x_r^c \Lambda_r$. Let $c_r = \inner{h_r}{\lambda_{(r)}}$, and let $b^{(r)} = (b_1, \dotsc, b_{c_r})$ be the values returned by $\delta_{(r)}^{c_r}$ (in $\RC(\lambda')$). Increase $s$ by the number of distinct elements ocurring in $b$ (note any particular value in $b$ occurs sequentially). We also do the following modifications depending on the type:
\begin{itemize}
\item[$B_n$:] If $0, \overline{r} \in b$, subtract 1 from $s$.
\item[$D_{n+1}$:] If $\overline{r} \in b$ and $n+1, \overline{n+1} \notin b$, then add 1 to $s$.
\item[$G_2$:] If $0, \bon \in b$, subtract 1 from $s$
\end{itemize}
Now recurse with $r-1$ unless $r = 1$, and $\rpt$ is the final value of $s$.
\end{dfn}

From our definition of $\rpt$, for a fixed $r$ we have $\inner{h_a}{\lambda} = 0$ for all $a > r$. So the values $p_i^{(a)}$ are equal on $\RC(\infty)$ and $\RC(\lambda)$ for all $a > r$ and $i \in \ZZ$. So the map $\delta_{(r)}$ just starts each time at $\nu^{(r)}$ and is well defined since all strings in $\nu^{(a)}$ for $a < r$ are non-singular after applying $\delta$. Furthermore by Lemma~\ref{lemma:embed_RC}, the statistic $\rpt$ is well defined. Thus we have the following.

\begin{prop}
Let $(\nu, J) \in \RC(\infty)$. Then
\[
\rpt(\nu, J) = \seg(\Psi(\nu, J)).
\]
\end{prop}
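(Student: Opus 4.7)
The plan is to induct on the row index $r$, processing $r=n$ down to $r=1$, and match the contribution at level $r$ of the recursive definition of $\rpt$ with the non-$r$ segments of row $r$ of the marginally large tableau $T=\Psi(\nu,J)$. First I would unpack $\Psi$: by construction and Proposition~\ref{prop:classical_bijection}, the bijection $\Phi$ processes the tableau column-by-column from right to left, and each height-$r$ column of $T$ corresponds to $r$ successive applications of $\delta$. Hence the operator $\delta_{(r)}=\delta^{r}$ (reporting only the first return $b_{1}$) corresponds to peeling off one height-$r$ column of $T$ and recording a distinguished entry tied to its occurrence in row $r$. The projection $\lambda_{(r)}=\lambda-x_r^{c}\Lambda_r$ then eliminates precisely the basic height-$r$ columns that form the padding $r$-segment of row $r$ in the marginally large representative; the remaining $c_r$ columns are the non-padding height-$r$ columns, and $\delta_{(r)}^{\,c_r}$ produces $b^{(r)}=(b_{1},\dots,b_{c_r})$.

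Next I would prove the main lemma: the distinct values appearing in $b^{(r)}$ (before any type-specific correction) correspond bijectively to the non-$r$ segments of row $r$ of $T$. The key is a local comparison: two consecutive height-$r$ columns yield the same return $b_{i}$ exactly when they share the same row-$r$ entry, since the selected singular strings in $\nu^{(r)}$ and in the higher parts $\nu^{(a)}$, $a>r$, evolve in lockstep on tableau repetitions and must shift whenever the entries change. Consequently $b^{(r)}$ decomposes into maximal runs of equal values, one per segment. This synchronization holds because $\delta$ preserves the colabels of all strings it does not modify (see the algorithm description in Section~\ref{sec:rigcon}), so the singular-string picture in the relevant parts transports consistently across successive $\delta_{(r)}$ applications.

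The type-specific corrections arise from the branching features of $\TT(\Lambda_{1})$ in each type. For type $B_{n}$ I would use the quasi-singular rule at $\nu^{(n)}$: when row $i$ contains both a $0$-segment and an $\overline{\imath}$-segment, the two successive $\delta$ returns $0$ and $\overline{\imath}$ encode only one transition in the underlying singular-string picture, producing the $-e_{B}(T)$ adjustment; the type $G_{2}$ case is handled identically, restricted to the first row. For type $D_{n+1}$, the fork at $\boxed{n}/\boxed{\overline{n+1}}$ in $\TT(\Lambda_{1})$ means that an $\overline{\imath}$-segment in row $i$ lacking a flanking $(n{+}1)$- or $\overline{n{+}1}$-segment arises from a path in $\delta$ that bypasses the spin vertex, omitting one return; the extra $+e_{D}(T)$ correction restores the correct count.

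The main technical obstacle will be formalizing the local comparison step: one must prove that after $k<c_r$ applications of $\delta_{(r)}$ the singular-string layout of $(\nu,J)$ in parts $a\ge r$ stays synchronized with $T$, so that $b_{k+1}=b_{k}$ iff the corresponding tableau entries agree. I would carry this out by induction on $k$, invoking the invariance of colabels under $\delta$ and, when transferring between the rigged configuration and tableau pictures, Theorem~\ref{thm:iso_columns}. Together with the type analyses above, summing over $r=n,\dots,1$ yields $\rpt(\nu,J)=\seg(\Psi(\nu,J))$.
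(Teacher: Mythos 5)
Your overall skeleton (work row by row from $r=n$ down to $r=1$, match the distinct values of $b^{(r)}$ with the non-$r$ segments of row $r$, then check the type corrections) is the right shape, but the route misidentifies where the content of the statement lies, and the paper's own proof is essentially a one-line observation: $\rpt$ is \emph{defined} as the transport of $\seg$ through $\Psi$. Your ``main lemma'' --- that $b_{k+1}=b_k$ if and only if the corresponding row-$r$ entries of $T$ agree, to be proved by a synchronization-of-singular-strings induction --- is circular: the tableau $T=\Psi(\nu,J)$ is \emph{constructed} as the sequence of letters returned by $\delta$, so the value $b_i$ \emph{is} the row-$r$ entry of the $i$-th non-padding column of height $r$; there is nothing to synchronize. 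The only two points that genuinely require justification are (i) that projecting to $\RC(\lambda-x_r^c\Lambda_r)$ strips exactly the basic (padding) height-$r$ columns and that $\delta_{(r)}$ is well defined, i.e.\ after the first return the remaining $r-1$ applications of $\delta$ in that column necessarily return $r-1,\dotsc,1$ because all strings in $\nu^{(a)}$ for $a<r$ have been made non-singular --- this is where Lemma~\ref{lemma:embed_RC} and the remark preceding the proposition do the work --- and (ii) a verbatim comparison of the type-specific corrections, which coincide by fiat ($0,\overline{r}\in b$ exactly when row $r$ has both a $0$-segment and an $\overline{r}$-segment, etc.). Your mechanistic explanations of those corrections are not right as stated: in type $D_{n+1}$ no return is ``omitted'' by any path in $\TT(\Lambda_1)$ ($\delta$ returns exactly one letter per box), and in type $B_n$ the $-e_B$ adjustment is not a consequence of the quasi-singular rule collapsing two returns into one transition; in both cases the adjustment is simply built into the definition of $\rpt$ to mirror $\seg$. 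So the conclusion is fine, but most of the machinery you propose (the inductive synchronization argument, the appeal to Theorem~\ref{thm:iso_columns} to ``transfer between pictures'') is solving a problem that the definitions have already solved.
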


%In the definition of $\rpt$, on applying $\delta_{(r)}$ we begin in $\nu^{(r)}$ by selecting the smallest length string with the largest colabel. Suppose the smallest selected string in $\nu^{(a)}$ (where $a \geq r$) of is length $\ell$, then all colabels of length $i < \ell$ remain fixed by the definition of $\delta$ and the vacancy numbers. Therefore on each successive application of $\delta_{(r)}$, we either act on the same string as before or a larger string in $\nu^{(r)}$.

%Let $\ell'$ denote the string selected before $\ell$, then the colabel decreases for all strings of length $\ell' \leq i < \ell$. We note that a colabel of 0 indicates a string is singular and can be selected by $\delta$ and all colabels are positive by the validity condition. Thus all subsequently selected strings by $\delta$ (i.e., except in $\nu^{(r)}$) have a tendency to be stay small.

From the definition of $\rpt$, it is clear that it is a direct translation of $\seg$ through the bijection $\Psi$. It would be good if there was a non-recursive translation of $\seg$ on rigged configurations.

\begin{ex}
Consider the rigged configuration $(\nu, J)$ obtained from Example~\ref{ex:full_bijection}:
\[
(\nu, J) =
\begin{tikzpicture}[scale=.35,baseline=-18]
 \rpp{2}{0}{2}
\begin{scope}[xshift=6cm]
 \rpp{2,1}{-2,-1}{0,0}
\end{scope}
\begin{scope}[xshift=12cm]
 \rpp{4}{1}{2}
\end{scope}
\begin{scope}[xshift=19cm]
 \rpp{3}{-3}{-2}
\end{scope}
\end{tikzpicture}.
\]
Thus we begin with $(\nu, J) \in \RC(\lambda_{\nu})$ with $\lambda_{\nu} = 4\Lambda_4 + 5\Lambda_3 + 4\Lambda_2 + 3\Lambda_1$, so we have
\[
\begin{tikzpicture}[scale=.35,baseline=-18]
 \rpp{2}{0}{2}
\begin{scope}[xshift=6cm]
 \rpp{2,1}{-2,-1}{1,1}
\end{scope}
\begin{scope}[xshift=12cm]
 \rpp{4}{1}{2}
\end{scope}
\begin{scope}[xshift=19cm]
 \rpp{3}{-3}{0}
\end{scope}
\end{tikzpicture}.
\]
We thus project onto $\RC(4\Lambda_3 + 3\Lambda_2 + 2\Lambda_1)$, and since $\inner{h_4}{\lambda_{(4)}} = 0$, there is nothing more to do. Next, after projecting onto $\RC(3\Lambda_3 + 3\Lambda_2 + 2\Lambda_1)$ and then applying $\delta_{(3)}^3$, we obtain
\[
\begin{tikzpicture}[scale=.35,baseline=-18]
 \rpp{2}{0}{2}
\begin{scope}[xshift=6cm]
 \rpp{2,1}{-2,-1}{0,1}
\end{scope}
\begin{scope}[xshift=13cm]
 \rpp{1}{1}{1}
\end{scope}
\begin{scope}[xshift=19cm]
 \rpp{1}{-1}{-1}
\end{scope}
\end{tikzpicture}
\]
with $b_{(3)} = (4,5,5)$. Projecting onto $\RC(\lambda_2 + 2\Lambda_1)$, we then obtain after applying $\delta_{(2)}$, we obtain
\[
\begin{tikzpicture}[scale=.35,baseline=-18]
 \rpp{2}{0}{0}
\begin{scope}[xshift=6cm]
 \rpp{2}{-2}{-2}
\end{scope}
\begin{scope}[xshift=13cm]
 \draw (0,-1.5) node {$\emptyset$};
\end{scope}
\begin{scope}[xshift=19cm]
 \draw (0,-1.5) node {$\emptyset$};
\end{scope}
\end{tikzpicture}
\]
with $b_{(2)} = (5)$. Finally, we apply $\delta_{(1)}^2$ and obtain $b_{(1)} = (3, 3)$. Therefore, we have $\rpt(\nu, J) = 2 + 1 + 1 = 4$. It is also easy to check that $\seg(T) = 4$.
\end{ex}

There is another statistic on rigged configurations which has a natural crystal interpretation. The \emph{difference statistic $\diff_a$} is defined by
\[
\diff_a(\nu, J) := \min_i \{ p_i^{(a)} - \max J_i^{(a)} \}
\]
for some $(\nu, J) \in \RC(\lambda)$. The difference statistic is measuring how far $(\nu, J)$ from being marginally valid in $\nu^{(a)}$, and so it can be interpreted as the largest $c_a$ such that $(\nu, J) \in \RC(\lambda - c_a \Lambda_a)$ (under the natural projection). We can also combine this into a single statistic $\diff(\nu, J) = \sum_{a \in I} \diff_a(\nu, J)$.

On highest weight crystals, the \emph{remove statistic $\rem_a$} is the largest $c_a$ such that $b \in B(\lambda)$ is non-zero under the natural projection to $B(\lambda - c_a \Lambda_a)$. Moreover, we can also combine this into a single statistic $\rem(\nu, J) = \sum_{a \in I} \rem_a(\nu, J)$. Given these interpretations, the following is an immediate consequence.

\begin{prop}
\label{prop:diff_rem_lambda}
Let $\Psi \colon \RC(\lambda) \longrightarrow B(\lambda)$ be an isomorphism. Then
\[
\diff_a(\nu, J) = \rem_a(\Psi(\nu, J)).
\]
\end{prop}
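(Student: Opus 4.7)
The plan is to exploit the two characterizations given immediately before the statement: $\diff_a(\nu,J)$ is the largest non-negative integer $c$ for which $(\nu,J)$ remains a valid rigged configuration in $\RC(\lambda - c\Lambda_a)$ under the natural projection, and $\rem_a(b)$ is the largest such $c$ for which the natural projection $B(\lambda) \longdoublearrow B(\lambda - c\Lambda_a)$ sends $b$ to a nonzero element. With both statistics written in this uniform ``largest surviving shift'' form, the claim reduces to showing that the natural projections on the two sides are intertwined by $\Psi$.

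First I would verify the interpretation of $\diff_a$. Passing from $\RC(\lambda)$ to $\RC(\lambda - c\Lambda_a)$ decreases $L^{(a)}$ by $c$, and by the vacancy number formula~\eqref{eq:vacancy}, this decreases every $p_i^{(a)}$ by $c$ while leaving all other vacancy numbers unchanged. Validity of $(\nu,J)$ in the new crystal therefore requires $\max J_i^{(a)} \leq p_i^{(a)} - c$ for each $i$, equivalently $c \leq \min_i \bigl( p_i^{(a)} - \max J_i^{(a)} \bigr) = \diff_a(\nu,J)$, as claimed. The analogous statement for $\rem_a$ is immediate from its definition.

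Next, both projections are restrictions of the natural projections out of $B(\infty)$: the map $\RC(\lambda) \longdoublearrow \RC(\lambda - c\Lambda_a)$ is induced by $\RC(\infty) \longdoublearrow \RC(\mu)$, and similarly on the tableau side. Since the isomorphism $\Psi$ is the restriction of the $B(\infty)$-crystal isomorphism of Theorem~\ref{thm:RCinf_nsl}---the unique such choice, as the automorphism group of $B(\infty)$ is trivial---$\Psi$ intertwines the two families of projections. Thus $(\nu,J)$ survives the projection into $\RC(\lambda - c\Lambda_a)$ if and only if $\Psi(\nu,J)$ survives the projection into $B(\lambda - c\Lambda_a)$, yielding the desired equality. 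There is no real obstacle here: once the ``largest surviving shift'' interpretation is pinned down, the result is a formal consequence of naturality, and in particular requires no reference to the recursive bijection $\delta$.
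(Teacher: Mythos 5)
Your proposal is correct and follows essentially the same route as the paper, which simply declares the proposition ``an immediate consequence'' of the two ``largest surviving shift'' interpretations of $\diff_a$ and $\rem_a$ given just before the statement. Your write-up merely fills in the details the paper leaves implicit (the vacancy-number computation showing $p_i^{(a)}$ drops by exactly $c$, and the compatibility of $\Psi$ with the natural projections), so there is nothing genuinely different to compare.
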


Furthermore, the difference statistic can be extended to $\RC(\infty)$, where it is measuring how far the rigged configuration is from being valid in $\nu^{(a)}$. Thus we see that the weight $\sum_{a \in I} \diff_a(\nu, J) \Lambda_a$ denotes the (unique) minimal weight that we need to project $(\nu, J) \in \RC(\infty)$ onto in order to guarantee the result is non-zero. We can also extend $\rem_a$ to $B(\infty)$ by considering the smallest weight $\lambda$ such that $b \in B(\infty)$ is non-zero under the projection onto $B(\lambda)$, and then $\rem_a(b) = \inner{\lambda}{h_a}$. We also have following the analog of Proposition~\ref{prop:diff_rem_lambda}.

\begin{prop}
Let $\Psi \colon \RC(\infty) \longrightarrow B(\infty)$ be the canonical isomorphism. Then
\[
\diff_a(\nu, J) = \rem_a(\Psi(\nu, J)).
\]
\end{prop}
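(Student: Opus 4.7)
The plan is to reduce the statement to the corresponding finite-level identity (Proposition~\ref{prop:diff_rem_lambda}) by exploiting that $\Psi$ commutes with the canonical projections $\RC(\infty) \twoheadrightarrow \RC(\lambda) \cup \{0\}$ and $B(\infty) \twoheadrightarrow B(\lambda) \cup \{0\}$.

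Unpacking the two extended definitions, $\diff_a(\nu,J)$ on $\RC(\infty)$ is the $a$-th coefficient of the coordinate-wise minimal dominant weight $\lambda^\ast$ for which $(\nu,J)$ is a valid element of $\RC(\lambda^\ast)$, and dually $\rem_a(\Psi(\nu,J))$ is the $a$-th coefficient of the coordinate-wise minimal dominant weight $\mu^\ast$ such that $\Psi(\nu,J)$ projects non-trivially into $B(\mu^\ast)$. It therefore suffices to show $\lambda^\ast = \mu^\ast$, which in turn reduces to the set equality
\[
\{\lambda \in P^+ : (\nu,J) \in \RC(\lambda)\} = \{\lambda \in P^+ : p_\lambda(\Psi(\nu,J)) \neq 0\},
\]
where $p_\lambda$ denotes the canonical projection.

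To verify the set equality, I would invoke Corollary~\ref{cor:bijection}: since $\Psi$ is a crystal isomorphism sending $(\nu_\emptyset,J_\emptyset)$ to $u_\infty$, any expression $(\nu,J) = f_{i_1}\cdots f_{i_k}(\nu_\emptyset,J_\emptyset)$ in $\RC(\infty)$ produces $\Psi(\nu,J) = f_{i_1}\cdots f_{i_k}u_\infty$ in $B(\infty)$. Now $(\nu,J) \in \RC(\lambda)$ if and only if $f_{i_1}\cdots f_{i_k}$ does not vanish when applied to $(\nu_\emptyset,J_\emptyset) \in \RC(\lambda)$, and $p_\lambda(\Psi(\nu,J)) \neq 0$ if and only if $f_{i_1}\cdots f_{i_k}u_\lambda \neq 0$ in $B(\lambda)$. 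By Proposition~\ref{prop:classical_bijection}, $\Phi \colon \RC(\lambda) \to B(\lambda)$ is a crystal isomorphism sending $(\nu_\emptyset,J_\emptyset)$ to $u_\lambda$, so the two conditions coincide, giving $\lambda^\ast = \mu^\ast$ and hence $\inner{h_a}{\lambda^\ast} = \inner{h_a}{\mu^\ast}$ for every $a \in I$.

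The main obstacle is the bookkeeping needed to pin down the two extensions: one must check that $\diff_a$ on $\RC(\infty)$ really is the $a$-th component of the minimal valid weight (which follows from the explicit relation $p_i^{(a)}\bigl(\RC(\lambda)\bigr) = p_i^{(a)}\bigl(\RC(\infty)\bigr) + \inner{h_a}{\lambda}$ together with dominance), and that a coordinate-wise minimal $\mu^\ast$ genuinely exists on $B(\infty)$ (which follows from $B(\infty)$ being the projective limit of the $B(\lambda)$). Once these verifications are in hand, the reduction above delivers $\diff_a(\nu,J) = \rem_a(\Psi(\nu,J))$.
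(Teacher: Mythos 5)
Your proposal is correct and follows essentially the same route as the paper: the paper offers no separate argument for this proposition, deducing it directly from the interpretations of $\diff_a$ as the $a$-th coefficient of the minimal weight for which $(\nu,J)$ remains valid and of $\rem_a$ as the minimal weight with non-vanishing projection, together with the compatibility of $\Psi$ with the projections to $\RC(\lambda)$ and $B(\lambda)$ coming from Proposition~\ref{prop:classical_bijection}. Your write-up simply makes that reduction explicit, including the verification via $p_i^{(a)}\bigl(\RC(\lambda)\bigr) = p_i^{(a)}\bigl(\RC(\infty)\bigr) + \inner{h_a}{\lambda}$ that the paper leaves implicit.
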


We can also interpret $\rem_a$ on (marginally large) tableaux, with no columns of height $n$ in types $B_n$ or $D_{n+1}$, as being the number of basic columns of height $a$ (possibly not full height) that can be removed from a tableaux $T \in \TT(\lambda)$ and sliding the entries of those rows left such that the result is a classical tableaux. Columns of height $n$ in type $B_n$ are counted twice, and in type $D_n$ they contribute to both $\rem_n$ and $\rem_{n+1}$. Additionally, we can extend this interpretation to $T \in \TT(\infty)$.

%Given the context of tableaux, we can define a modified version of $\rem$ by $\widetilde{\rem} := \sum_{a \in I} a \rem_a$ which counts the number of boxes (instead of the number of columns) that can removed.

\bibliography{RC_MLT}{}
\bibliographystyle{amsalpha}
\end{document}